\newtheorem{theorem}{Theorem}[section]
\newtheorem{lemma}[theorem]{Lemma}
\newtheorem{proposition}[theorem]{Proposition}
\newtheorem{corollary}[theorem]{Corollary}
\theoremstyle{definition}
\newtheorem{example}[theorem]{Example}
\theoremstyle{remark}
\numberwithin{equation}{section}
\begin{document}

\title[Mappings preserving approximate orthogonality]{Mappings preserving approximate orthogonality in Hilbert $C^*$-modules }

\author[M.S. Moslehian and A. Zamani]{Mohammad Sal Moslehian$^*$ and Ali Zamani}

\address{Department of Pure Mathematics, Ferdowsi University of
Mashhad, P.O. Box 1159, Mashhad 91775, Iran}
\email{moslehian@um.ac.ir, moslehian@member.ams.org; Zamani.ali85@yahoo.com}
\subjclass[2010]{47B49, 46L05, 46L08.}
\keywords{Orthogonality preserving mapping, Approximate orthogonality, $(\delta, \varepsilon)$-orthogonality preserving mapping, Inner product $C^*$-module}

\begin{abstract}
We introduce a notion of approximate orthogonality preserving mappings between Hilbert $C^*$-modules. We define the concept of $(\delta, \varepsilon)$-orthogonality preserving mapping and give some sufficient conditions for a linear mapping to be $(\delta, \varepsilon)$-orthogonality preserving.
In particular, if $\mathscr{E}$ is a full Hilbert $\mathscr{A}$-module with $\mathbb{K}(\mathscr{H})\subseteq \mathscr{A} \subseteq \mathbb{B}(\mathscr{H})$ and $T, S:\mathscr{E}\longrightarrow \mathscr{E}$ are two linear mappings satisfying $\big|\langle Sx, Sy\rangle\big| = \|S\|^2\,|\langle x, y\rangle|$ for all $x, y\in \mathscr{E}$
and $\|T - S\| \leq \theta \|S\|$, then we show that $T$ is a $(\delta, \varepsilon)$-orthogonality preserving mapping. We also prove whenever $\mathbb{K}(\mathscr{H})\subseteq \mathscr{A} \subseteq \mathbb{B}(\mathscr{H})$ and $T: \mathscr{E} \longrightarrow \mathscr{F}$ is a nonzero $\mathscr{A}$-linear $(\delta, \varepsilon)$-orthogonality preserving mapping between $\mathscr{A}$-modules, then
$$\big\|\langle Tx, Ty\rangle - \|T\|^2\langle x, y\rangle\big\|\leq \frac{4(\varepsilon - \delta)}{(1 - \delta)(1 + \varepsilon)} \|Tx\|\,\|Ty\|\qquad (x, y\in \mathscr{E}).$$
As a result, we present some characterizations of the orthogonality preserving mappings.
\end{abstract} \maketitle
%%==============================================================================================
\section{Introduction and preliminaries}
An inner product module over a $C^{*}$-algebra
$\mathscr{A}$ is a (right) $\mathscr{A}$-module $\mathscr{E}$ equipped with an $\mathscr{A}$-valued inner product
$\langle\cdot,\cdot\rangle$, which
is $\Bbb{C}$-linear and $\mathscr{A}$-linear in the second variable and has the properties $\langle x, y\rangle^*=\langle y, x\rangle$ as well as $\langle x, x\rangle \geq 0$ with equality
if and only if $x = 0$.
An inner product $\mathscr{A}$-module $\mathscr{E}$ is called a Hilbert $\mathscr{A}$-module if it is complete with respect to
the norm $\|x\|=\|\langle x, x\rangle\|^{\frac{1}{2}}$.
An inner product $\mathscr{A}$-module $\mathscr{E}$ has an ``$\mathscr{A}$-valued norm" $|\cdot|$, defined by $|x|=\langle x,x\rangle^{\frac{1}{2}}$. A mapping $T:\mathscr{E}\longrightarrow \mathscr{F}$, where $\mathscr{E}$ and $\mathscr{F}$ are inner product $\mathscr{A}$-modules, is called $\mathscr{A}$-linear if it is linear and $T(xa)=(Tx)a$ for all $x\in \mathscr{E}$, $a\in\mathscr{A}$.\\
Although inner product $C^{*}$-modules generalize inner product spaces by allowing inner products to take values in an arbitrary $C^{*}$-algebra instead of the $C^{*}$-algebra of complex numbers, but some fundamental properties of inner product spaces are no longer
valid in inner product $C^{*}$-modules. For example, not each closed submodule of an inner product $C^{*}$-module is complemented. Therefore, when we are studying in inner product $C^{*}$-modules, it is always of some interest to find conditions to obtain the results analogous to those for inner product spaces. We refer the reader to \cite{Man} for more information on the basic theory of Hilbert $C^{*}$-modules.

Let $\mathbb{B}(\mathscr{H})$ be the $C^*$-algebra of all bounded operators acting on a complex Hilbert space $(\mathscr{H}, (\cdot,\cdot))$ and $\mathbb{K}(\mathscr{H})$ denote the ideal of compact operators. It is well known that the class of Hilbert $\mathbb{K}(\mathscr{H})$-modules is a well-behaved class of Hilbert $C^{*}$-modules and they share many
nice properties with Hilbert spaces. For example, these structures have orthonormal bases and all closed submodules of
such modules are complemented. Many properties of Hilbert $C^{*}$-modules over $C^{*}$-algebras of compact operators can be found in \cite{B.G}.

Given two vectors $\eta, \zeta$ in a Hilbert space ${\mathscr{H}}$, we shall denote by $\eta\otimes \zeta \in \mathbb{K}(\mathscr{H})$
the one-rank operator defined by $(\eta\otimes \zeta)(\xi) = (\xi, \zeta)\eta$.
Obviously, $\|\eta\otimes \zeta\| = \|\eta\|\,\|\zeta\|$ and ${\rm tr} (\eta\otimes \zeta) = (\eta, \zeta)$.
Observe that $\eta\otimes \eta$ is the orthogonal projection to the one dimensional subspace spanned by the unit vector $\eta$.
If $T$ is an arbitrary bounded operator on ${\mathscr{H}}$, then $(\eta\otimes \eta) T (\eta\otimes \eta) = (T\eta, \eta)\eta\otimes \eta$. This
shows that $\eta\otimes \eta$ is a minimal projection. Recall that a projection (i.e., a self-adjoint idempotent) $e$ in a $C^*$-algebra $\mathscr{A}$ is called minimal if $e \mathscr{A}e = \mathbb{C}e$.

Now let $\mathscr{E}$ be an inner product (respectively, Hilbert) $\mathscr{A}$-module, where $\mathbb{K}(\mathscr{H})\subseteq \mathscr{A} \subseteq \mathbb{B}(\mathscr{H})$. Let $e = \eta\otimes \eta$ for some unit vector $\eta\in {\mathscr{H}}$, be any minimal projection. Then
$\mathscr{E}_e = \{xe: \, x\in\mathscr{E}\},$ is a complex inner product (respectively, Hilbert) space contained in $\mathscr{E}$ with respect to the inner product
$(x, y) = {\rm tr}(\langle x, y\rangle)$, $x, y\in {\mathscr{E}}_e$; see \cite{B.G}. It is not hard to see that $\langle x, y\rangle = (x, y)e$. Note that if $x\in\mathscr{E}_e$, then $\|x\|_{\mathscr{E}_e} = \|x\|_{\mathscr{E}}$, where the norm $\|.\|_{\mathscr{E}_e}$ comes from the inner product $(\cdot,\cdot)$. This enables us to apply Hilbert space theory by lifting results from the
Hilbert space $\mathscr{E}_e$ to the whole $\mathscr{A}$-module $\mathscr{E}$.

The orthogonality equation and the related orthogonality preserving property play an important role in Hilbert $C^*$-modules, operator algebras, $K$-theory and group representation theory; see \cite{A.R.1, B.G.2, F.M.P, L.N.W.2} and the references therein.

Recall that vectors $\eta, \zeta$ in an inner product $\mathscr{H}$ are said to be orthogonal, and write $\eta\perp \zeta$, if $(\eta, \zeta) = 0$ and, for a given $\delta\geq0$, they are approximately orthogonal or $\delta$-orthogonal, denoted by $\eta\perp^{\delta} \zeta$, if
$|(\eta, \zeta)|\leq\delta\|\eta\|\,\|\zeta\|.$
For $\delta\geq1$, it is clear that every pair of vectors are $\delta$-orthogonal, so the interesting case is
when $\delta\in[0, 1)$.

A mapping $T: {\mathscr{H}}\to {\mathscr{K}}$, where ${\mathscr{H}}$ and ${\mathscr{K}}$ are inner product spaces, is called orthogonality preserving if
$\eta\perp \zeta \Rightarrow T\eta\perp T\zeta \,\,(\eta, \zeta\in{\mathscr{H}}).$
It is known that orthogonality preserving mappings may be nonlinear and discontinuous but under additional assumption
of linearity, a mapping $T$ is orthogonality preserving if and only if it is a scalar multiple of an isometry, that is $T = \gamma U$, where $U$ is
an isometry and $\gamma\geq 0$; see \cite{J.C.4}. It should be noticed that the same result is obtained in \cite{Z.C.H.K} by using a different approach.
The orthogonality preserving mappings have been considered also in \cite{P}.

Analogously, for $\delta, \varepsilon\in[0 , 1)$, a mapping $T: {\mathscr{H}}\to {\mathscr{K}}$
is said to be approximately orthogonality preserving, or $(\delta, \varepsilon)$-orthogonality preserving, if
$\eta\perp^{\delta} \zeta \Rightarrow T\eta\perp^{\varepsilon} T\zeta \,\,(\eta, \zeta\in{\mathscr{H}}).$
Approximately orthogonality preserving mappings
have been recently intensively studied in connection with functional analysis and
operator theory; cf. \cite{J.C.4, C.L.W, K.H, A.T.1, P.W, Z.M.F, Z.C.H.K}.

An interesting question is whether a $(\delta, \varepsilon)$-orthogonality preserving mapping $T$
must be close to a linear orthogonality preserving mapping.

In the case where $\delta=0$, Chmieli\'{n}ski \cite{J.C.4} and Turn\v{s}ek \cite{A.T.1} verified the properties of mappings that preserve approximate orthogonality in inner product spaces. Also Kong and Cao \cite{K.H} studied stability of approximate orthogonality preserving mappings and the orthogonality equations. Approximate orthogonality preserving mappings between inner product spaces
have been recently considered by W\'{o}jcik in \cite{P.W}.

Other approximate orthogonalities in general normed spaces
along with the corresponding approximately orthogonality preserving mappings
have been studied in \cite{C.W.1, M.T, Z.M.1}. Similar investigations have been carried
out in Hilbert spaces in \cite{Ch.new, C.L.W, L.W}.

It is natural to explore the orthogonality preserving mappings between inner product $C^{*}$-modules.
So, a mapping $T: \mathscr{E}\to \mathscr{F}$, where $\mathscr{E}$ and $\mathscr{F}$ are inner product $\mathscr{A}$-modules, is called orthogonality preserving if $x\perp y \Rightarrow Tx\perp Ty \,\,(x, y \in\mathscr{E})$.
Also, for $\delta, \varepsilon\in[0 , 1)$, it is called approximately orthogonality preserving, or $(\delta, \varepsilon)$-orthogonality preserving, if $$\|\langle x, y\rangle\|\leq\delta\|x\|\,\|y\| \Rightarrow \|\langle Tx, Ty\rangle\|\leq\varepsilon\|Tx\|\,\|Ty\|\qquad(x, y\in\mathscr{E}).$$
The natural problems are to describe such a class of approximately orthogonality preserving mappings and whether each $(\delta, \varepsilon)$-orthogonality preserving mapping has to be approximated by an orthogonality preserving mapping. Ili\v{s}evi\'{c} and Turn\v{s}ek \cite{I.T} studied approximate orthogonality preserving mappings on $\mathscr{A}$-modules with $\mathbb{K}(\mathscr{H})\subseteq \mathscr{A} \subseteq \mathbb{B}(\mathscr{H})$ in the case when $\delta=0$. Orthogonality preserving mappings have been treated also by Frank et al. \cite{F.M.P} and Leung et al. \cite{L.N.W.2}.

In this paper, we study $(\delta, \varepsilon)$-orthogonality preserving mappings between Hilbert $\mathscr{A}$-modules, which generalize some results from \cite{J.C.4, I.T, K.H, A.T.1, P.W}. In Section 2, some sufficient conditions for a linear mapping to be $(\delta, \varepsilon)$-orthogonality preserving are given. In particular,
we show that if $\mathscr{E}$ is a full Hilbert $\mathscr{A}$-module with $\mathbb{K}(\mathscr{H})\subseteq \mathscr{A} \subseteq \mathbb{B}(\mathscr{H})$ and $T, S:\mathscr{E}\longrightarrow \mathscr{E}$ are two linear mappings such that
$\big|\langle Sx, Sy\rangle\big| = \|S\|^2\,|\langle x, y\rangle|$
for all $x, y\in \mathscr{E}$ and $\|T - S\| \leq \theta \|S\|$
with $\theta \in[0, 1)$, then $T$ is a $(\delta, \varepsilon)$-orthogonality preserving mapping, where $\varepsilon = \frac{\theta^2 + 2\theta + \delta}{(1 - \theta)^2}$.

In Section 3 we prove if $\mathbb{K}(\mathscr{H})\subseteq \mathscr{A} \subseteq \mathbb{B}(\mathscr{H})$ and $T: \mathscr{E} \longrightarrow \mathscr{F}$ is a nonzero $\mathscr{A}$-linear $(\delta, \varepsilon)$-orthogonality preserving mapping between $\mathscr{A}$-modules, then
$$\big\|\langle Tx, Ty\rangle - \|T\|^2\langle x, y\rangle\big\|\leq \frac{4(\varepsilon - \delta)}{(1 - \delta)(1 + \varepsilon)} \|Tx\|\,\|Ty\|\qquad(x, y\in \mathscr{E}).$$
As a result, we obtain some characterizations of the orthogonality preserving mappings in inner product $\mathscr{A}$-modules.
Particularly, we show that a nonzero $\mathscr{A}$-linear mapping $T$ is orthogonality preserving if and only if $T$ is $(\varepsilon, \varepsilon)$-orthogonality preserving. Our results improve some theorems due to Chmieli\'{n}ski \cite{J.C.4} and W\'{o}jcik \cite{P.W}.

\section{Approximate orthogonality preserving property in Hilbert $C^*$-modules}

In this section, we give some sufficient conditions for a linear mapping to be $(\delta, \varepsilon)$-orthogonality preserving. Recall that the minimum modulus $[T]$ of a linear map $T$ is defined by $[T]: =\inf\{ \|Tx\|: \, \|x\|=1\}$.
\begin{proposition}\label{pr.1}
Let $\theta\geq 1$ , $\lambda\in [0, \frac{1}{4})$ and $0\leq \delta < \frac{1 - 4\lambda}{\theta^4}$. Let $\mathscr{E}$ and $\mathscr{F}$ be two inner product $\mathscr{A}$-modules and let $T, S:\mathscr{E}\longrightarrow \mathscr{F}$ be nonzero linear mappings such that
\begin{itemize}
\item[(i)] $\|Tx - Sx\|\leq \lambda\|Sx\|$ for all $x\in \mathscr{E}$
\item[(ii)] $\frac{1}{\theta^2}\gamma^2\|\langle x, y\rangle\|\leq \|\langle Sx, Sy\rangle\| \leq \theta^2\gamma^2\|\langle x, y\rangle\|$ for all
$x, y\in \mathscr{E}$,
\end{itemize}
with some $\gamma\in\big[[S], \|S\|\big]$. Then $T$ is a $(\delta, \varepsilon)$-orthogonality preserving mapping, where $\varepsilon = \frac{\lambda^2 + 2\lambda + \theta^4\delta}{(1 - \lambda)^2}$.
\end{proposition}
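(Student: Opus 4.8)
The plan is to expand $\langle Tx, Ty\rangle$ around $\langle Sx, Sy\rangle$ and control each resulting term separately. Writing $r_x = Tx - Sx$ and $r_y = Ty - Sy$, bilinearity of the inner product gives
$$\langle Tx, Ty\rangle = \langle Sx, Sy\rangle + \langle Sx, r_y\rangle + \langle r_x, Sy\rangle + \langle r_x, r_y\rangle,$$
so by the triangle inequality $\|\langle Tx, Ty\rangle\| \leq \|\langle Sx, Sy\rangle\| + \|\langle Sx, r_y\rangle\| + \|\langle r_x, Sy\rangle\| + \|\langle r_x, r_y\rangle\|$. The first task is to record the elementary consequences of hypothesis (i): applying the triangle inequality to $\|Tx\| = \|Sx + r_x\|$ yields the two-sided estimate $(1 - \lambda)\|Sx\| \leq \|Tx\| \leq (1 + \lambda)\|Sx\|$, and in particular (since $\lambda < \tfrac14 < 1$) the lower bound rearranges to $\|Sx\| \leq (1 - \lambda)^{-1}\|Tx\|$, which is what will convert $S$-norms back into $T$-norms at the end.

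Next I would estimate the three perturbation terms using the Cauchy--Schwarz inequality $\|\langle u, v\rangle\| \leq \|u\|\,\|v\|$ valid in any inner product $\mathscr{A}$-module, together with $\|r_x\| \leq \lambda\|Sx\|$ and $\|r_y\| \leq \lambda\|Sy\|$ from (i). This gives $\|\langle Sx, r_y\rangle\| \leq \lambda\|Sx\|\,\|Sy\|$, the symmetric bound $\|\langle r_x, Sy\rangle\| \leq \lambda\|Sx\|\,\|Sy\|$, and $\|\langle r_x, r_y\rangle\| \leq \lambda^2\|Sx\|\,\|Sy\|$, so the three cross terms together contribute at most $(\lambda^2 + 2\lambda)\|Sx\|\,\|Sy\|$. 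For the main term I would invoke the orthogonality hypothesis $\|\langle x, y\rangle\| \leq \delta\|x\|\,\|y\|$ and the upper half of (ii) to get $\|\langle Sx, Sy\rangle\| \leq \theta^2\gamma^2\delta\,\|x\|\,\|y\|$.

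The crucial bookkeeping step is to turn $\|x\|\,\|y\|$ into $\|Sx\|\,\|Sy\|$ using the lower half of (ii). Taking $y = x$ there gives $\tfrac{1}{\theta^2}\gamma^2\|x\|^2 \leq \|Sx\|^2$, hence $\|x\| \leq \tfrac{\theta}{\gamma}\|Sx\|$, and likewise for $y$; multiplying yields $\|x\|\,\|y\| \leq \tfrac{\theta^2}{\gamma^2}\|Sx\|\,\|Sy\|$, so the main term is bounded by $\theta^4\delta\,\|Sx\|\,\|Sy\|$ and the $\gamma^2$ factors cancel cleanly. Combining all four terms gives $\|\langle Tx, Ty\rangle\| \leq (\theta^4\delta + 2\lambda + \lambda^2)\|Sx\|\,\|Sy\|$, and substituting $\|Sx\|\,\|Sy\| \leq (1 - \lambda)^{-2}\|Tx\|\,\|Ty\|$ from the first paragraph produces exactly the constant $\varepsilon = \frac{\lambda^2 + 2\lambda + \theta^4\delta}{(1 - \lambda)^2}$.

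The only genuinely delicate point, rather than an obstacle, is verifying that this $\varepsilon$ lies in $[0, 1)$ so that the conclusion is meaningful as stated in the definition of $(\delta, \varepsilon)$-orthogonality preserving. This is where the hypothesis $\delta < \frac{1 - 4\lambda}{\theta^4}$ (with $\lambda \in [0, \tfrac14)$) is used: it forces $\lambda^2 + 2\lambda + \theta^4\delta < \lambda^2 - 2\lambda + 1 = (1 - \lambda)^2$, so $\varepsilon < 1$. Throughout, one must keep careful track of the direction of each inequality in (ii) and the correct powers of $\theta$, since the upper bound feeds the numerator while the lower bound feeds the norm conversion; apart from that the argument is a direct chain of Cauchy--Schwarz and triangle-inequality estimates.
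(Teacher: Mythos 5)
Your proposal is correct and follows essentially the same route as the paper: the same decomposition of $\langle Tx,Ty\rangle$ into $\langle Sx,Sy\rangle$ plus three perturbation terms, the same Cauchy--Schwarz bounds giving $(\lambda^2+2\lambda)\|Sx\|\,\|Sy\|$, and the same use of the lower half of (ii) together with $\|Sx\|\le(1-\lambda)^{-1}\|Tx\|$ to convert everything into $\|Tx\|\,\|Ty\|$; reordering the norm conversions (via $\|Sx\|\,\|Sy\|$ rather than directly to $\|Tx\|\,\|Ty\|$) is immaterial. Your closing verification that the hypothesis $\delta<\frac{1-4\lambda}{\theta^4}$ forces $\varepsilon<1$ is a small point the paper leaves implicit, and it is a worthwhile addition.
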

\begin{proof}
It follows from (i) that
$$\|Sx\| = \|Sx - Tx + Tx\| \leq \|Sx - Tx\| + \|Tx\| \leq \lambda\|Sx\| + \|Tx\| \quad (x\in \mathscr{E}).$$
Hence
\begin{align}\label{id.11}
\|Sx\| \leq \frac{1}{1 - \lambda}\|Tx\| \qquad (x\in \mathscr{E}).
\end{align}
Put $y = x$ in (ii) to get $\|x\| \leq \frac{\theta}{\gamma}\|Sx\|$,
whence by $(\ref {id.11})$,
\begin{align}\label{id.13}
\|x\| \leq \frac{\theta}{(1 - \lambda)\gamma}\|Tx\| \qquad (x\in \mathscr{E}).
\end{align}
Now, fix $x, y\in \mathscr{E}$ with $x\perp^\delta y$. Hence $\|\langle x, y\rangle\|\leq \delta \|x\|\,\|y\|$. By (i) and (ii), we get
\begin{align*}
&\|\langle Tx, Ty\rangle\|\\
&\leq \|\langle Tx, Ty\rangle - \langle Sx, Sy\rangle\| + \|\langle Sx, Sy\rangle\|
\\&\leq \big\|\langle Tx - Sx, Ty - Sy\rangle + \langle Tx - Sx, Sy\rangle + \langle Sx, Ty - Sy\rangle\big\|\\
&\quad + \theta^2\gamma^2\|\langle x, y\rangle\|
\\& \leq \|Tx - Sx\|\,\|Ty - Sy\| + \|Tx - Sx\|\,\|Sy\| + \|Sx\|\,\|Ty - Sy\|\\
&\quad + \theta^2\gamma^2\delta\|x\|\,\|y\|
\\& \leq \lambda^2\|Sx\|\,\|Sy\| + 2\lambda\|Sx\|\,\|Sy\| + \theta^2\gamma^2\delta\|x\|\,\|y\|\hspace{2.5cm}(\mbox{by}\,(\ref{id.13}))
\\& \leq (\lambda^2 + 2\lambda)\|Sx\|\,\|Sy\| + \theta^2\gamma^2\delta \times \frac{\theta^2}{(1 - \lambda)^2\gamma^2}\|Tx\|\,\|Ty\|\hspace{1cm}(\mbox{by}\,(\ref{id.11}))
\\& \leq (\lambda^2 + 2\lambda)\times \frac{1}{(1 - \lambda)^2}\|Tx\|\,\|Ty\| + \frac{\theta^4\delta}{(1 - \lambda)^2}\|Tx\|\,\|Ty\|
\\& = \frac{\lambda^2 + 2\lambda + \theta^4\delta}{(1 - \lambda)^2}\|Tx\|\,\|Ty\|.
\end{align*}
Thus $\|\langle Tx, Ty\rangle\|\leq \varepsilon \|Tx\|\,\|Ty\|$ and hence $Tx\perp^\varepsilon Ty$.
\end{proof}
As a consequence, with $\theta = \sqrt[4]{\frac{\varepsilon}{\delta}}, \lambda = 0$ and $S = T$, we have
\begin{corollary}\label{cr.2}
Let $\delta, \varepsilon\in[0, 1)$. Let $\mathscr{E}$ and $\mathscr{F}$ be two inner product $\mathscr{A}$-modules and let $T:\mathscr{E}\longrightarrow \mathscr{F}$ be a nonzero linear mapping satisfying
$$\sqrt{\frac{\delta}{\varepsilon}}\gamma^2\|\langle x, y\rangle\|\leq \|\langle Tx, Ty\rangle\| \leq \sqrt{\frac{\varepsilon}{\delta}}\gamma^2\|\langle x, y\rangle\|,$$
for all $x, y\in \mathscr{E}$ with some $\gamma\in\big[[T], \|T\|\big]$. Then $T$ is a $(\delta, \varepsilon)$-orthogonality preserving mapping.
\end{corollary}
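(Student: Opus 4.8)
The plan is to deduce this corollary as a direct specialization of Proposition~\ref{pr.1}, choosing the free parameters so that its two structural hypotheses collapse onto the single two-sided estimate assumed here. Concretely, I would set $S = T$, take $\lambda = 0$, and put $\theta = \sqrt[4]{\varepsilon/\delta}$, working in the nondegenerate regime $0 < \delta \leq \varepsilon < 1$, which is the case of genuine interest. The task then reduces to checking that the numerical side-conditions of Proposition~\ref{pr.1} hold and that conditions (i) and (ii) reproduce exactly the hypothesis of the present statement.

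First I would dispatch the routine checks. Condition (i), $\|Tx - Sx\| \leq \lambda\|Sx\|$, becomes $0 \leq 0$ once $S = T$ and $\lambda = 0$, so it holds identically, and $\lambda = 0 \in [0, \tfrac14)$ trivially. For the admissibility requirement $0 \leq \delta < \frac{1 - 4\lambda}{\theta^4}$, substituting $\lambda = 0$ and $\theta^4 = \varepsilon/\delta$ gives $\delta < \delta/\varepsilon$, which after dividing by $\delta > 0$ is just $\varepsilon < 1$, precisely our standing assumption.

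Next I would match condition (ii) to the assumed inequality. With $S = T$ it reads $\frac{1}{\theta^2}\gamma^2\|\langle x, y\rangle\| \leq \|\langle Tx, Ty\rangle\| \leq \theta^2\gamma^2\|\langle x, y\rangle\|$ for $\gamma \in \big[[T], \|T\|\big]$; since $\theta^2 = \sqrt{\varepsilon/\delta}$ and $1/\theta^2 = \sqrt{\delta/\varepsilon}$, this is verbatim the hypothesis of the corollary. Proposition~\ref{pr.1} then gives that $T$ is $(\delta, \varepsilon')$-orthogonality preserving with $\varepsilon' = \frac{\lambda^2 + 2\lambda + \theta^4\delta}{(1-\lambda)^2}$, and feeding in $\lambda = 0$ together with $\theta^4\delta = (\varepsilon/\delta)\delta = \varepsilon$ collapses this to $\varepsilon' = \varepsilon$, which closes the argument.

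The main obstacle is not analytic but lies in the boundary behaviour of the parameter substitution. The requirement $\theta \geq 1$ forces $\varepsilon \geq \delta$, and indeed when $\delta > \varepsilon$ the lower and upper bounds in the hypothesis are inconsistent (the larger factor $\sqrt{\delta/\varepsilon}$ multiplies the lower bound), so the assumption degenerates and the statement carries no content; moreover $\theta = \sqrt[4]{\varepsilon/\delta}$ is ill-defined at $\delta = 0$, where the hypothesis likewise becomes vacuous. I would therefore state the corollary under $0 < \delta \leq \varepsilon < 1$, or else isolate the extreme values $\delta = 0$ and $\delta = \varepsilon$ (the latter giving $\theta = 1$), so that the clean substitution applies throughout the remaining range.
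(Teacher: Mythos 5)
Your proposal is exactly the paper's own derivation: the authors obtain Corollary~\ref{cr.2} as an immediate consequence of Proposition~\ref{pr.1} with the same substitutions $S = T$, $\lambda = 0$, $\theta = \sqrt[4]{\varepsilon/\delta}$, and your parameter checks confirm that this specialization is legitimate. Your closing observation about the degenerate cases ($\delta = 0$, where $\theta$ is undefined, and $\delta > \varepsilon$, where $\theta \geq 1$ fails and the hypothesis is vacuous) is a sound refinement that the paper leaves implicit, but it does not change the route.
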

It follows from the inequality in Corollary \ref{cr.2} that $x\perp y \Rightarrow Tx\perp Ty \,\,(x, y \in\mathscr{E})$.
In the following we give an example of $(\delta, \varepsilon)$-orthogonality preserving mapping between Hilbert $C^*$-modules.
\begin{example}
Let $0<\delta\leq\varepsilon<1$ and let $\mathscr{E}$ and $\mathscr{F}$ be two inner product $\mathscr{A}$-modules.
We define $T:\mathscr{E}\longrightarrow \mathscr{F}$ by $Tx = \sqrt{\frac{\varepsilon}{\delta}}x$. Suppose that $x, y\in\mathscr{E}$ satisfies
$x\perp^\delta y$. Hence $\|\langle x, y\rangle\|\leq \delta \|x\|\,\|y\|$. Therefore, we get
\begin{align*}\|\langle Tx, Ty\rangle\| &= \frac{\varepsilon}{\delta}\|\langle x, y\rangle\|\leq \varepsilon \|x\|\,\|y\| = \delta \left\|\sqrt{\frac{\varepsilon}{\delta}}x\right\|\,\left\|\sqrt{\frac{\varepsilon}{\delta}}y\right\| \\
&= \delta \|Tx\|\,\|Ty\| \leq \varepsilon \|Tx\|\,\|Ty\|.
\end{align*}
Thus $Tx\perp^\varepsilon Ty$. This shows that $T$ is a $(\delta, \varepsilon)$-orthogonality preserving mapping. In addition, if we consider $Tx = \sqrt{\frac{\varepsilon}{\delta}}\|x\|\,x$, then for all $x, y\in\mathscr{E}$, the condition $x\perp^\delta y$ implies $Tx\perp^\varepsilon Ty$
but $T$ is not linear.
\end{example}

For inner product $\mathscr{A}$-module $\mathscr{E}$ we define the relation which is connected with the notion of angle. Fix $\delta, \varepsilon\in[0, 1)$ and $c\in\mathscr{A}$ with $\|c\|< 1$. Let us say $\angle_c^\delta$ if $\Big\|\langle x, y\rangle - \|x\|\,\|y\|\,c\Big\|\leq \delta \|x\|\,\|y\|.$
A mapping $T:\mathscr{E}\longrightarrow \mathscr{F}$, where $\mathscr{E}$ and $\mathscr{F}$ are inner product $\mathscr{A}$-modules, is called $(\delta, \varepsilon, c)$-angle preserving, if $x \,\angle_c^\delta \,y \Rightarrow Tx \,\angle_c^\varepsilon \,Ty \,\,(x, y\in \mathscr{E})$. It is easy to see that $T$ is a $(\delta, \varepsilon, 0)$-angle preserving mapping if and only if $T$ is $(\delta, \varepsilon)$-orthogonality preserving.
\begin{theorem}\label{th.101}
Let $\mathscr{E}$ be a full Hilbert $\mathscr{A}$-module with $\mathbb{K}(\mathscr{H})\subseteq \mathscr{A} \subseteq \mathbb{B}(\mathscr{H})$ such that $\dim \mathscr{H} > 1$ and let a nonzero bounded linear mapping $S:\mathscr{E}\longrightarrow \mathscr{E}$ satisfy
\begin{align}\label{id.101.1}
\big|\langle Sx, Sy\rangle\big| = \|S\|^2\,|\langle x, y\rangle| \qquad (x, y\in \mathscr{E}).
\end{align}
Let $c\in\mathscr{A}$ with $\|c\|< 1$, $\delta\in [0, 1- \|c\|)$ and $\theta\in[0, 1)$. If a linear mapping $T:\mathscr{E}\longrightarrow \mathscr{E}$ satisfies $\|T - S\| \leq \theta \|S\|$, then $T$ is $(\delta, \varepsilon, c)$-angle preserving, where $\varepsilon = \frac{\theta^2 + 2\theta + \delta + ( \theta^2 - 2\theta - 2)\|c\| }{(1 - \theta)^2}$.
\end{theorem}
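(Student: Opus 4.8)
The plan is to first upgrade the hypothesis \eqref{id.101.1} from an identity between $\mathscr{A}$-valued absolute values to a genuine inner-product identity. Setting $y=x$ and using that $\langle Sx,Sx\rangle$ and $\langle x,x\rangle$ are positive (so each equals its own absolute value), I obtain $\langle Sx,Sx\rangle=\|S\|^2\langle x,x\rangle$, and in particular $\|Sx\|=\|S\|\,\|x\|$ for every $x$. Now I invoke the polarization identity for the $\mathscr{A}$-valued inner product, which expresses $\langle u,v\rangle$ as a fixed linear combination of the diagonal values $\langle u+i^{k}v,\,u+i^{k}v\rangle$, $k=0,1,2,3$. Replacing $u,v$ by $Sx,Sy$ and using linearity of $S$ together with the diagonal identity term by term yields
\begin{align*}
\langle Sx,Sy\rangle=\|S\|^2\langle x,y\rangle\qquad(x,y\in\mathscr{E}).
\end{align*}
This is the crux of the argument: in one stroke it shows $S$ is $\|S\|$ times an inner-product isometry (and, as a by-product, that $S$ is automatically $\mathscr{A}$-linear, since positive-definiteness forces $S(xa)=(Sx)a$).

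Next I record the two-sided comparison of $T$ with $S$. From $\|T-S\|\le\theta\|S\|$ and $\|Sx\|=\|S\|\,\|x\|$ one gets $\|Tx-Sx\|\le\theta\|Sx\|$, whence by the triangle inequality
\begin{align*}
(1-\theta)\|S\|\,\|x\|\le\|Tx\|\le(1+\theta)\|S\|\,\|x\|\qquad(x\in\mathscr{E}).
\end{align*}
Since $\theta\in[0,1)$ the lower bound is positive on nonzero vectors and provides the conversion $\|S\|^2\|x\|\,\|y\|\le(1-\theta)^{-2}\|Tx\|\,\|Ty\|$, which will turn an estimate on the $\|S\|$-scale into one on the $\|Tx\|\,\|Ty\|$-scale.

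For the main estimate, fix $x,y$ with $x\,\angle_c^\delta\,y$, i.e. $\big\|\langle x,y\rangle-\|x\|\,\|y\|c\big\|\le\delta\|x\|\,\|y\|$. Writing $T=S+(T-S)$ and expanding, $\langle Tx,Ty\rangle=\|S\|^2\langle x,y\rangle+E$, where $E$ collects the three cross terms $\langle Sx,(T-S)y\rangle$, $\langle (T-S)x,Sy\rangle$, $\langle (T-S)x,(T-S)y\rangle$; these are bounded in norm by $(\theta^2+2\theta)\|S\|^2\|x\|\,\|y\|$ via the comparison above. Subtracting $\|Tx\|\,\|Ty\|c$ and inserting $\pm\|S\|^2\|x\|\,\|y\|c$ gives
\begin{align*}
\langle Tx,Ty\rangle-\|Tx\|\,\|Ty\|c=\|S\|^2\big(\langle x,y\rangle-\|x\|\,\|y\|c\big)+E+\big(\|S\|^2\|x\|\,\|y\|-\|Tx\|\,\|Ty\|\big)c.
\end{align*}
Taking norms, the first summand is controlled by $\delta\|S\|^2\|x\|\,\|y\|$ through the angle hypothesis, the second by $(\theta^2+2\theta)\|S\|^2\|x\|\,\|y\|$, and the third through the two-sided bound on $\|Tx\|\,\|Ty\|$; collecting the three contributions and converting to the $\|Tx\|\,\|Ty\|$-scale via the lower bound yields $\big\|\langle Tx,Ty\rangle-\|Tx\|\,\|Ty\|c\big\|\le\varepsilon\|Tx\|\,\|Ty\|$, i.e. $Tx\,\angle_c^\varepsilon\,Ty$. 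The constraints $\theta\in[0,1)$ and $\delta\in[0,1-\|c\|)$ are precisely what is needed to keep the resulting $\varepsilon$ admissible.

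The main obstacle is the $c$-dependent correction $\big(\|S\|^2\|x\|\,\|y\|-\|Tx\|\,\|Ty\|\big)c$: its scalar coefficient may have either sign, so landing on the precise displayed constant requires tracking the sign of $\|S\|^2\|x\|\,\|y\|-\|Tx\|\,\|Ty\|$ rather than merely bounding its modulus (a crude bound produces the coefficient $(\theta^2+2\theta)\|c\|$, which already collapses to the sharp value $\varepsilon=\delta$ in the purely isometric case $\theta=0$). Everything else is linear bookkeeping. I note that fullness of $\mathscr{E}$ and $\dim\mathscr{H}>1$ are not needed for this route; if one instead prefers to argue through the associated inner product space $\mathscr{E}_e=\{xe:x\in\mathscr{E}\}$ attached to a minimal projection $e$, these hypotheses guarantee that $\mathscr{E}_e$ is a genuine Hilbert space on which the scalar analogues of the same estimates can be run and then lifted back to $\mathscr{E}$.
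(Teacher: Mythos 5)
Your core route is correct and genuinely different from the paper's. The paper never converts \eqref{id.101.1} into an exact identity: it invokes the module version of Wigner's theorem \cite[Theorem 1]{B.G.2} to write $Sz=\|S\|\varphi(z)Uz$ with $U$ an $\mathscr{A}$-linear isometry and $\varphi$ a phase function, and then runs a four-term telescoping estimate, \eqref{id.101.4}, \eqref{id.101.5}, \eqref{id.101.9}, \eqref{id.101.10}, two terms of which exist only to absorb the unknown phases $\overline{\varphi(x)}\varphi(y)$. You instead exploit the linearity of $S$: positivity gives the diagonal identity $\langle Sx,Sx\rangle=\|S\|^2\langle x,x\rangle$, and polarization (purely sesquilinear algebra, valid in any inner product $\mathscr{A}$-module) upgrades it to $\langle Sx,Sy\rangle=\|S\|^2\langle x,y\rangle$. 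That step is correct, it eliminates the phase function entirely, and, as you observe, it renders the hypotheses $\mathbb{K}(\mathscr{H})\subseteq\mathscr{A}\subseteq\mathbb{B}(\mathscr{H})$, fullness of $\mathscr{E}$ and $\dim\mathscr{H}>1$ superfluous (they are needed only to run Wigner's theorem, i.e., when no linearity of $S$ is assumed). Your three-term decomposition then parallels the paper's estimates, and your $\mathscr{A}$-linearity by-product is also correct, though unused.

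The gap is the final constant. What your estimate actually proves is the bound with
$\varepsilon'=\frac{\theta^2+2\theta+\delta+(\theta^2+2\theta)\|c\|}{(1-\theta)^2}$,
and your promise that ``tracking the sign'' of $\|S\|^2\|x\|\,\|y\|-\|Tx\|\,\|Ty\|$ will lower the coefficient of $\|c\|$ from $\theta^2+2\theta$ to the displayed $\theta^2-2\theta-2$ cannot be kept; your own parenthetical remark is the refutation. At $\theta=0$ one has $T=S$, the correction term vanishes identically, and the method yields exactly $\varepsilon=\delta$, whereas the displayed value is $\delta-2\|c\|<0$ once $\delta<2\|c\|$; no nonnegative quantity can be bounded by a negative one. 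Concretely, take $\mathscr{H}=\mathbb{C}^2$, $\mathscr{E}=\mathscr{A}=M_2(\mathbb{C})$ with $\langle a,b\rangle=a^*b$, $S=T=\mathrm{id}$, $c=t\,e_{11}$ with $0<t<1$, $0\le\delta<\min\{1-t,\,2t\}$, and (with $e_{ij}$ the matrix units) $x=e_{11}$, $y=t\,e_{11}+\sqrt{1-t^2}\,e_{21}$. Then $\|x\|=\|y\|=1$ and $\langle x,y\rangle=c$, so $x\,\angle_c^\delta\,y$, while the asserted conclusion would read
\begin{align*}
0=\big\|\langle Tx,Ty\rangle-\|Tx\|\,\|Ty\|\,c\big\|\le(\delta-2t)\,\|Tx\|\,\|Ty\|<0,
\end{align*}
a contradiction. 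So Theorem \ref{th.101} as printed is false for $c\ne 0$: the culprit is an arithmetic slip in the paper's last display, since the four bounds \eqref{id.101.4}, \eqref{id.101.5}, \eqref{id.101.9}, \eqref{id.101.10} sum to the coefficient $(2\theta-\theta^2)\|c\|+2(1-\theta)^2\|c\|=(\theta^2-2\theta+2)\|c\|$, i.e.\ $+2$, not $-2$.

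The correct move, then, is not to chase the printed constant but to state the theorem with a corrected one. Your bookkeeping gives $(\theta^2+2\theta)\|c\|$; bounding $\big|\|S\|^2\|x\|\,\|y\|-\|Tx\|\,\|Ty\|\big|$ directly against $\|Tx\|\,\|Ty\|$ via \eqref{id.101.2} improves this to $(2\theta-\theta^2)\|c\|$, which beats both your constant and the paper's, since $(\theta^2+2\theta)-(2\theta-\theta^2)=2\theta^2\ge 0$ and $(\theta^2-2\theta+2)-(2\theta-\theta^2)=2(1-\theta)^2>0$. With that adjustment your argument is a complete, and in fact more general, proof of the corrected statement.
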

\begin{proof}
For $x = z$ and $y = z$, (\ref{id.101.1}) becomes $\|Sz\| = \|S\|\,\|z\|.$
This implies
$$\Big|\|Tz\| - \|S\|\|z\|\Big| = \Big|\|Tz\| - \|Sz\|\Big| \leq \|Tz - Sz\| \leq \|T - S\|\|z\| \leq \theta \|S\|\|z\|.$$
Thus
\begin{align}\label{id.101.2}
\|Tz\| \leq ( 1 + \theta )\|S\|\,\|z\| \quad \mbox{and} \quad \|z\| \leq \frac{\|Tz\|}{\|S\|(1 - \theta )} \qquad (z\in\mathscr{E}).
\end{align}
From (\ref{id.101.1}) we have
$\left|\left\langle \frac{S}{\|S\|}x, \frac{S}{\|S\|}y\right\rangle\right| = |\langle x, y\rangle| \,\,(x, y\in \mathscr{E}).$
So $\frac{S}{\|S\|}$ preserves the absolute value of the $\mathscr{A}$-valued
inner product on $\mathscr{E}$. By the Wigner's theorem \cite[Theorem 1]{B.G.2} there exist an $\mathscr{A}$-linear isometry $U:\mathscr{E}\longrightarrow \mathscr{E}$ and a phase function $\varphi:\mathscr{E}\longrightarrow \mathbb{C}$ (i.e. its values are of modulus 1) such that
\begin{align}\label{id.101.3}
\frac{S}{\|S\|}z = \varphi(z)Uz \qquad (z\in\mathscr{E}).
\end{align}
Now, let $x, y\in \mathscr{E}$ and $x\, \angle_c^\delta \,y$. By (\ref{id.101.2}), we get
\begin{align}\label{id.101.6}
\|x\|\,\|y\| - \frac{1}{\|S\|^2}\|Tx\|\,\|Ty\| \leq \frac{1}{\|S\|^2}\Big(\frac{1}{(1 - \theta )^2} - 1\Big)\|Tx\|\,\|Ty\|
\end{align}
and
\begin{align}\label{id.101.7}
\frac{1}{\|S\|^2}\|Tx\|\,\|Ty\|  - \|x\|\,\|y\| \leq \frac{1}{\|S\|^2}\Big(1 - \frac{1}{( 1 + \theta )^2}\Big)\|Tx\|\,\|Ty\|.
\end{align}
Sine $1 - \frac{1}{( 1 + \theta )^2}\leq \frac{1}{(1 - \theta )^2} - 1 = \frac{2\theta - \theta^2}{(1 - \theta )^2}$, (\ref{id.101.6}) and (\ref{id.101.7}) yield
\begin{align}\label{id.101.8}
\Big|\|x\|\,\|y\| - \frac{1}{\|S\|^2}\|Tx\|\,\|Ty\|\Big| \leq \frac{2\theta - \theta^2}{\|S\|^2(1 - \theta )^2}\|Tx\|\,\|Ty\|.
\end{align}
Further, by (\ref{id.101.3}) we get
\begin{align}\label{id.101.4}
&\Big\|\Big\langle \frac{T}{\|S\|}x, \frac{T}{\|S\|}y\Big\rangle - \langle \varphi(x)Ux, \varphi(y)Uy\rangle \Big\|\nonumber
\\ &= \left\|\Big\langle \frac{T}{\|S\|}x, \frac{T}{\|S\|}y\Big\rangle - \Big\langle \frac{S}{\|S\|}x, \frac{S}{\|S\|}y\Big\rangle\right\|\nonumber
\\& = \frac{1}{\|S\|^2}\Big\|\langle Tx - Sx, Ty - Sy\rangle + \langle Tx - Sx, Sy\rangle + \langle Sx, Ty - Sy\rangle\Big\|\nonumber
\\& \leq \frac{1}{\|S\|^2}\Big( \|Tx - Sx\|\,\|Ty - Sy\| + \|Tx - Sx\|\,\|Sy\| + \|Sx\|\,\|Ty - Sy\|\Big)\nonumber
\\& \leq \frac{1}{\|S\|^2}\Big( \|T - S\|^2\,\|x\|\,\|y\| + \|T - S\|\,\|S\|\,\|x\|\,\|y\| \\
&\quad + \|S\|\,\|T - S\|\,\|x\|\,\|y\|\Big)\nonumber
\\& \leq \frac{1}{\|S\|^2}\Big( \theta^2 \|S\|^2\,\|x\|\,\|y\| + \theta \|S\|^2\,\|x\|\,\|y\| + \theta\|S\|^2\,\|x\|\,\|y\|\Big)\nonumber
\\& = (\theta^2 + 2\theta )\|x\|\,\|y\|\qquad\qquad\qquad\qquad\qquad\qquad\qquad\qquad\qquad(\mbox{by}\,(\ref{id.101.2}))\nonumber
\\& \leq \frac{ \theta^2 + 2\theta }{\|S\|^2(1 - \theta )^2}\|Tx\|\,\|Ty\|.
\end{align}
Since $x\, \angle_c^\delta \,y$, we have $\Big\|\langle x, y\rangle - \|x\|\,\|y\|\,c\Big\|\leq \delta \|x\|\,\|y\|$ and so we obtain
\begin{align}\label{id.101.5}
& \Big\|\langle \varphi(x)Ux, \varphi(y)Uy\rangle - \overline{\varphi(x)}\varphi(y)\|x\|\,\|y\|c\Big\|\\ & = |\overline{\varphi(x)}|\,|\varphi(y)|\,\Big\|\langle U^*Ux, y\rangle - \|x\|\,\|y\|c\Big\|\nonumber
\\& = \Big\|\langle x, y\rangle - \|x\|\,\|y\|c\Big\|\nonumber
\\& \leq \delta \|x\|\,\|y\| \quad\qquad\qquad\qquad\qquad\qquad\qquad\qquad\qquad\qquad\qquad(\mbox{by}\,(\ref{id.101.2}))\nonumber
\\& \leq \frac{\delta}{\|S\|^2(1 - \theta )^2}\|Tx\|\,\|Ty\|.
\end{align}
From (\ref{id.101.8}) it follows that
\begin{align}\label{id.101.9}
&\hspace{-2cm}\left\|\overline{\varphi(x)}\varphi(y)\|x\|\,\|y\|c - \frac{\overline{\varphi(x)}\varphi(y)}{\|S\|^2}\|Tx\|\,\|Ty\|c\right\|\\
&= |\overline{\varphi(x)}|\,|\varphi(y)|\,\Big|\|x\|\,\|y\| - \frac{1}{\|S\|^2}\|Tx\|\,\|Ty\|\Big|\,\|c\|\nonumber
\\& \leq \frac{2\theta - \theta^2}{\|S\|^2(1 - \theta )^2}\|Tx\|\,\|Ty\| \,\|c\|.
\end{align}
Also, notice that
\begin{align}\label{id.101.10}
&\hspace{-0.5cm}\left\|\frac{\overline{\varphi(x)}\varphi(y)}{\|S\|^2}\|Tx\|\,\|Ty\|c - \frac{1}{\|S\|^2}\|Tx\|\,\|Ty\|c\right\|\\
& = \frac{1}{\|S\|^2} \|Tx\|\,\|Ty\|\,|\overline{\varphi(x)}\varphi(y) - 1|\,\|c\|\nonumber
\\& \leq \frac{1}{\|S\|^2} \|Tx\|\,\|Ty\|\Big(|\overline{\varphi(x)}|\,|\varphi(y)| + 1\Big)\,\|c\| = \frac{2}{\|S\|^2} \|Tx\|\,\|Ty\|\,\|c\|.
\end{align}
Now, we observe that
\begin{align*}
&\hspace{-0.5cm}\Big\|\langle Tx, Ty\rangle - \|Tx\|\,\|Ty\|\,c\Big\|\\
&\leq \|S\|^2\Big( \left\|\langle \frac{T}{\|S\|}x, \frac{T}{\|S\|}y\rangle - \langle \varphi(x)Ux, \varphi(y)Uy\rangle \right\|
\\ & \quad + \Big\|\langle \varphi(x)Ux, \varphi(y)Uy\rangle - \overline{\varphi(x)}\varphi(y)\|x\|\,\|y\|c\Big\|
\\ & \quad + \left\|\overline{\varphi(x)}\varphi(y)\|x\|\,\|y\|c - \frac{\overline{\varphi(x)}\varphi(y)}{\|S\|^2}\|Tx\|\,\|Ty\|c\right\|
\\ & \quad + \left\|\frac{\overline{\varphi(x)}\varphi(y)}{\|S\|^2}\|Tx\|\,\|Ty\|c - \frac{1}{\|S\|^2}\|Tx\|\,\|Ty\|c\right\|\Big)
\\& \hspace{5 cm}\Big(\mbox{ by (\ref{id.101.4}), (\ref{id.101.5}), (\ref{id.101.9}) and (\ref{id.101.10})}\Big)
\\ & \leq \|S\|^2\Big( \frac{ \theta^2 + 2\theta }{\|S\|^2(1 - \theta )^2}\|Tx\|\,\|Ty\| + \frac{\delta}{\|S\|^2(1 - \theta )^2}\|Tx\|\,\|Ty\|
\\&\quad + \frac{2\theta - \theta^2}{\|S\|^2(1 - \theta )^2}\|Tx\|\,\|Ty\| \,\|c\| + \frac{2}{\|S\|^2} \|Tx\|\,\|Ty\|\,\|c\|\Big)
\\& = \frac{\theta^2 + 2\theta + \delta + ( \theta^2 - 2\theta - 2)\|c\| }{(1 - \theta)^2}\,\|Tx\|\,\|Ty\|.
\end{align*}
Thus $\Big\|\langle Tx, Ty\rangle - \|Tx\|\,\|Ty\|\,c\Big\| \leq \varepsilon \|Tx\|\,\|Ty\|$ and hence $Tx\, \angle_c^\varepsilon\, Ty$.
\end{proof}
As a consequence, with $c = 0$, we have
\begin{corollary}\label{cr.201}
Let $\delta, \theta\in[0, 1)$. Let $\mathscr{E}$ be a full Hilbert $\mathscr{A}$-module with $\mathbb{K}(\mathscr{H})\subseteq \mathscr{A} \subseteq \mathbb{B}(\mathscr{H})$ such that $\dim \mathscr{H} > 1$ and let a nonzero bounded linear mapping $S:\mathscr{E}\longrightarrow \mathscr{E}$ satisfying
$$\big|\langle Sx, Sy\rangle\big| = \|S\|^2\,|\langle x, y\rangle| \qquad (x, y\in \mathscr{E}).$$
If a linear mapping $T:\mathscr{E}\longrightarrow \mathscr{E}$ satisfies $\|T - S\| \leq \theta \|S\|$, then $T$ is $(\delta, \varepsilon)$-orthogonality preserving, where $\varepsilon = \frac{\theta^2 + 2\theta + \delta}{(1 - \theta)^2}$.
\end{corollary}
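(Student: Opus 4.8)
The plan is to obtain this statement as the special case $c = 0$ of Theorem \ref{th.101}, rather than to run any new estimate. First I would verify that the hypotheses of the corollary match exactly those of the theorem when the element $c\in\mathscr{A}$ is chosen to be $0$: the space $\mathscr{E}$ is a full Hilbert $\mathscr{A}$-module with $\mathbb{K}(\mathscr{H})\subseteq \mathscr{A}\subseteq\mathbb{B}(\mathscr{H})$ and $\dim\mathscr{H}>1$, the map $S$ is a nonzero bounded linear mapping satisfying the absolute-value identity $|\langle Sx,Sy\rangle| = \|S\|^2|\langle x,y\rangle|$, and $\|T-S\|\leq\theta\|S\|$ with $\theta\in[0,1)$. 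Since $\|0\| = 0$, the admissibility constraint $\delta\in[0, 1-\|c\|)$ in Theorem \ref{th.101} collapses to $\delta\in[0,1)$, which is precisely the hypothesis imposed in the corollary, so the theorem applies with no extra restriction on $\delta$.

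Next I would unwind the definitions so that the angle-preserving conclusion becomes the desired orthogonality-preserving one. For $c = 0$ the relation $x\,\angle_c^\delta\,y$ reads $\big\|\langle x,y\rangle - \|x\|\,\|y\|\cdot 0\big\| = \|\langle x,y\rangle\|\leq \delta\|x\|\,\|y\|$, which is exactly $x\perp^\delta y$; in the same way $Tx\,\angle_c^\varepsilon\,Ty$ is exactly $Tx\perp^\varepsilon Ty$. This is the equivalence already recorded in the paragraph preceding Theorem \ref{th.101}, namely that a mapping is $(\delta,\varepsilon,0)$-angle preserving if and only if it is $(\delta,\varepsilon)$-orthogonality preserving. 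Hence the implication $x\perp^\delta y \Rightarrow Tx\perp^\varepsilon Ty$ follows immediately from the theorem's conclusion.

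Finally I would evaluate the constant. Substituting $\|c\| = 0$ into the value $\varepsilon = \frac{\theta^2 + 2\theta + \delta + (\theta^2 - 2\theta - 2)\|c\|}{(1-\theta)^2}$ furnished by Theorem \ref{th.101} annihilates the term carrying the factor $\|c\|$ and leaves $\varepsilon = \frac{\theta^2 + 2\theta + \delta}{(1-\theta)^2}$, exactly the constant claimed. Since the whole argument is a pure specialization, there is essentially no genuine obstacle beyond this bookkeeping; the only point meriting a moment's care is confirming that the interval of admissible $\delta$ truly widens to the full range $[0,1)$ as $c\to 0$, so that the corollary inherits no hidden constraint from the more general theorem.
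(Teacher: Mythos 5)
Your proposal is correct and is precisely the paper's own route: the paper derives Corollary~\ref{cr.201} by the single remark ``As a consequence, with $c = 0$'' applied to Theorem~\ref{th.101}, which is exactly the specialization you carry out. Your verification that $\delta\in[0,1-\|c\|)$ relaxes to $\delta\in[0,1)$, that $(\delta,\varepsilon,0)$-angle preservation coincides with $(\delta,\varepsilon)$-orthogonality preservation, and that the constant reduces to $\frac{\theta^2+2\theta+\delta}{(1-\theta)^2}$ just makes explicit the bookkeeping the paper leaves implicit.
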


\section{Mappings preserving approximate orthogonality in Hilbert $C^*$-modules}

In this section, we study $(\delta, \varepsilon)$-orthogonality preserving mappings between Hilbert $\mathscr{A}$-modules whenever $\mathbb{K}(\mathscr{H})\subseteq \mathscr{A} \subseteq \mathbb{B}(\mathscr{H})$. To achieve our main result we prove first some auxiliary results.

\begin{proposition}\label{pr.2}
Let $T: \mathscr{H} \longrightarrow \mathscr{K}$ be a $(\delta, \varepsilon)$-orthogonality preserving linear mapping. If $\eta, \zeta\in \mathscr{H}$ are orthogonal unit vectors, then
$$\sqrt{\frac{(n + 1)(1 - \delta)(1 - \varepsilon)}{n(1 + \delta)(1 + \varepsilon)}} \|T\zeta\| \leq \|T\eta\| \leq \sqrt{\frac{(n + 1)(1 - \delta)(1 + \varepsilon)}{n(1 + \delta)(1 - \varepsilon)}} \|T\zeta\|$$
for all $n\in\mathbb{N}$.
\end{proposition}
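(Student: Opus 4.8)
The plan is to test the approximate-orthogonality-preserving hypothesis on a one-parameter family of polarization-type pairs built from $\eta$ and $\zeta$, and to read the two-sided estimate off the resulting scalar inequality. Throughout write $A=\|T\eta\|^2$, $B=\|T\zeta\|^2$ and $w=(T\eta,T\zeta)$. For a real parameter $t>0$ put $u=\eta+t\zeta$ and $v=\eta-t\zeta$. Since $\eta,\zeta$ are orthonormal one has $(u,v)=1-t^2$ and $\|u\|^2=\|v\|^2=1+t^2$, so $u\perp^{\delta}v$ exactly when $|1-t^2|\le\delta(1+t^2)$, i.e. when
\[
t^2\in\Big[\tfrac{1-\delta}{1+\delta},\ \tfrac{1+\delta}{1-\delta}\Big].
\]
First I would fix $t=t_n$ with $t_n^{2}=\dfrac{(n+1)(1-\delta)}{n(1+\delta)}$, verify that this value lies in the admissible window above, and thereby secure $u\perp^{\delta}v$.

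Next I would feed this pair into the hypothesis: $u\perp^{\delta}v$ forces $Tu\perp^{\varepsilon}Tv$, that is $|(Tu,Tv)|\le\varepsilon\|Tu\|\,\|Tv\|$. Expanding gives $(Tu,Tv)=A-t^2B-2it\,\mathrm{Im}\,w$, whence $|(Tu,Tv)|\ge|A-t^2B|$; and $\|Tu\|^2\|Tv\|^2=(A+t^2B)^2-4t^2(\mathrm{Re}\,w)^2$, whence $\|Tu\|\,\|Tv\|\le A+t^2B$. The key observation here is that the cross term $w=(T\eta,T\zeta)$ enters only favourably: its imaginary part can only enlarge the left-hand side while its real part can only shrink the right-hand side, so neither has to be controlled directly. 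Chaining the three inequalities yields the clean scalar relation
\[
\big|\,A-t^2B\,\big|\le\varepsilon\,(A+t^2B).
\]

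Finally I would unfold this into its two halves, $A(1-\varepsilon)\le t^2B(1+\varepsilon)$ and $t^2B(1-\varepsilon)\le A(1+\varepsilon)$, i.e.
\[
t^2\,\frac{1-\varepsilon}{1+\varepsilon}\,B\le A\le t^2\,\frac{1+\varepsilon}{1-\varepsilon}\,B,
\]
and substitute $t^2=t_n^2=\dfrac{(n+1)(1-\delta)}{n(1+\delta)}$. Taking square roots then turns this into precisely the asserted two-sided bound relating $\|T\eta\|$ and $\|T\zeta\|$, for each $n$.

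I expect the main obstacle to be the bookkeeping in the middle step together with the verification that $t_n$ genuinely yields a $\delta$-orthogonal pair: before the preservation hypothesis may be invoked one must confirm that $\frac{(n+1)(1-\delta)}{n(1+\delta)}$ lies in $\big[\frac{1-\delta}{1+\delta},\frac{1+\delta}{1-\delta}\big]$. This is also exactly where the index $n$ enters the picture, since as $n$ grows the factor $\frac{n+1}{n}$ decreases to $1$ and the estimate sharpens toward the limiting constants $\sqrt{\frac{(1-\delta)(1\pm\varepsilon)}{(1+\delta)(1\mp\varepsilon)}}$, which are the values that will feed the norm inequality established in the next section.
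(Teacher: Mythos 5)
Your strategy is exactly the paper's: test the hypothesis on $u=\eta+t_n\zeta$, $v=\eta-t_n\zeta$ with $t_n^2=\tfrac{(n+1)(1-\delta)}{n(1+\delta)}$, discard the $\mathrm{Im}\,w$ term on the left and the $\mathrm{Re}\,w$ term on the right, and unfold $|A-t_n^2B|\le\varepsilon(A+t_n^2B)$; that middle and final algebra is correct and coincides with the paper's computation. The genuine gap is precisely the step you deferred: the verification that $t_n^2$ lies in the admissible window $\big[\tfrac{1-\delta}{1+\delta},\tfrac{1+\delta}{1-\delta}\big]$ \emph{fails} in general. The left inequality is just $\tfrac{n+1}{n}\ge 1$, but the right inequality $t_n^2\le\tfrac{1+\delta}{1-\delta}$ is equivalent to $(1-\delta)^2\le 4n\delta$, which is false whenever $\delta$ is small compared with $1/n$; in the extreme case $\delta=0$ the window degenerates to $\{1\}$ while $t_n^2=\tfrac{n+1}{n}>1$, so the pair $(u,v)$ is not $\delta$-orthogonal and the preservation hypothesis can never be invoked.

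This is not a repairable defect of the argument, because the statement itself is false in that regime. Take $T=\mathrm{id}$, which is trivially $(\delta,\delta)$-orthogonality preserving; for orthonormal $\eta,\zeta$ the claimed lower bound reads $\sqrt{\tfrac{(n+1)(1-\delta)^2}{n(1+\delta)^2}}\le 1$, i.e. exactly $(1-\delta)^2\le 4n\delta$, so the identity is a counterexample whenever the admissibility condition fails (e.g. $\delta=\varepsilon=0$, $n=1$ gives $\sqrt{2}\le 1$). A red flag you could have caught: for the lower bound, finite $n$ asserts something strictly \emph{stronger} than the $n\to\infty$ limit, and applying it with $\eta$ and $\zeta$ interchanged yields $\|T\eta\|\ge c\,\|T\zeta\|\ge c^2\|T\eta\|$ with a constant $c>1$, which is absurd for any injective map. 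For what it is worth, the paper's own proof commits the same error: it writes $|(u,v)|=1-t_n^2$ without the absolute value (silently assuming $t_n^2\le 1$, i.e. $\delta\ge\tfrac{1}{2n+1}$) and only ever checks the inequality corresponding to the left end of the window. The salvageable statement --- and the only one used later, via Corollary \ref{cr.3} --- comes from the always-admissible boundary choice $t^2=\tfrac{1-\delta}{1+\delta}$ (no $\tfrac{n+1}{n}$ factor), which by your own computation yields
\begin{equation*}
\sqrt{\frac{(1-\delta)(1-\varepsilon)}{(1+\delta)(1+\varepsilon)}}\;\|T\zeta\|\;\le\;\|T\eta\|\;\le\;\sqrt{\frac{(1-\delta)(1+\varepsilon)}{(1+\delta)(1-\varepsilon)}}\;\|T\zeta\|.
\end{equation*}
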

\begin{proof}
Let $n\in\mathbb{N}$. We have
\begin{align*}
&\hspace{-1cm}\Big|( \eta + \sqrt{\frac{(n + 1)(1 - \delta)}{n(1 + \delta)}}\zeta, \eta - \sqrt{\frac{(n + 1)(1 - \delta)}{n(1 + \delta)}}\zeta)\Big|
\\&= 1 - \frac{(n + 1)(1 - \delta)}{n(1 + \delta)}
\\& \leq \delta \left[1 + \frac{(n + 1)(1 - \delta)}{n(1 + \delta)}\right]
\\& = \delta \Big\|\eta + \sqrt{\frac{(n + 1)(1 - \delta)}{n(1 + \delta)}}\zeta\Big\|\,\Big\|\eta - \sqrt{\frac{(n + 1)(1 - \delta)}{n(1 + \delta)}}\zeta\Big\|.
\end{align*}
So, we get
$\eta + \sqrt{\frac{(n + 1)(1 - \delta)}{n(1 + \delta)}}\zeta \,\perp^{\delta}\,\zeta - \sqrt{\frac{(n + 1)(1 - \delta)}{n(1 + \delta)}}\zeta.$
Since $T$ is a $(\delta, \varepsilon)$-orthogonality preserving mapping, we reach
$$T\eta + \sqrt{\frac{(n + 1)(1 - \delta)}{n(1 + \delta)}}T \zeta\,\perp^{\varepsilon}\, T\eta - \sqrt{\frac{(n + 1)(1 - \delta)}{n(1 + \delta)}}T\zeta.$$
Therefore,
\begin{align*}
&\hspace{-0.5cm}\Big|(T\eta + \sqrt{\frac{(n + 1)(1 - \delta)}{n(1 + \delta)}}T\zeta, T\eta - \sqrt{\frac{(n + 1)(1 - \delta)}{n(1 + \delta)}}T\zeta)\Big|
\\&\leq \varepsilon \left\|T\eta + \sqrt{\frac{(n + 1)(1 - \delta)}{n(1 + \delta)}}T\zeta\right\|\,\left\|T\eta - \sqrt{\frac{(n + 1)(1 - \delta)}{n(1 + \delta)}}T\zeta\right\|,
\end{align*}
whence
\begin{align*}
&\Big(\|T\eta\|^2 - \frac{(n + 1)(1 - \delta)}{n(1 + \delta)}\|T\zeta\|^2\Big)^2 + 4\Big[Im(T\eta, \sqrt{\frac{(n + 1)(1 - \delta)}{n(1 + \delta)}}T\zeta)\Big]^2
\\& \leq \varepsilon^2 \Big(\Big(\|T\eta\|^2 + \frac{(n + 1)(1 - \delta)}{n(1 + \delta)}\|T\zeta\|^2\Big)^2
\\&\quad - 4\Big[Re(T\eta, \sqrt{\frac{(n + 1)(1 - \delta)}{n(1 + \delta)}}T\zeta)\Big]^2\Big).
\end{align*}
It follows that
$$\left|\|T\eta\|^2 - \frac{(n + 1)(1 - \delta)}{n(1 + \delta)}\|T\zeta\|^2\right| \leq \varepsilon \left(\|T\eta\|^2 + \frac{(n + 1)(1 - \delta)}{n(1 + \delta)}\|T\zeta\|^2\right),$$
or equivalently,
$$\sqrt{\frac{(n + 1)(1 - \delta)(1 - \varepsilon)}{n(1 + \delta)(1 + \varepsilon)}} \|T\zeta\| \leq \|T\eta\| \leq \sqrt{\frac{(n + 1)(1 - \delta)(1 + \varepsilon)}{n(1 + \delta)(1 - \varepsilon)}} \|T\zeta\|.$$
\end{proof}
\begin{corollary}\label{cr.3}
Let $T: \mathscr{H} \longrightarrow \mathscr{K}$ be a $(\delta, \varepsilon)$-orthogonality preserving mapping. If $\eta, \zeta\in \mathscr{H}\setminus\{0\}$ are orthogonal vectors, then
$$\sqrt{\frac{(1 - \delta)(1 - \varepsilon)}{(1 + \delta)(1 + \varepsilon)}} \|T\zeta\|\,\|\eta\| \leq \|T\eta\|\,\|\zeta\| \leq \sqrt{\frac{(1 - \delta)(1 + \varepsilon)}{(1 + \delta)(1 - \varepsilon)}} \|T\zeta\|\,\|\eta\|.$$
\end{corollary}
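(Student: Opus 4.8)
The plan is to obtain Corollary \ref{cr.3} from Proposition \ref{pr.2} in two moves: first let $n \to \infty$ to erase the combinatorial factor $\tfrac{n+1}{n}$, thereby settling the statement for orthogonal \emph{unit} vectors, and then reduce the case of arbitrary nonzero orthogonal vectors to the unit-vector case by normalization. Both steps are elementary, the whole content of the corollary being already packed into Proposition \ref{pr.2}.

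First I would treat orthogonal unit vectors $\eta, \zeta$. For every $n \in \mathbb{N}$, Proposition \ref{pr.2} supplies the two-sided estimate with coefficients $\sqrt{\tfrac{(n+1)(1-\delta)(1\mp\varepsilon)}{n(1+\delta)(1\pm\varepsilon)}}$ (top signs giving the lower bound, bottom signs the upper bound). Since $\tfrac{n+1}{n}$ decreases monotonically to $1$, both coefficient sequences are monotone and converge to $\sqrt{\tfrac{(1-\delta)(1\mp\varepsilon)}{(1+\delta)(1\pm\varepsilon)}}$. Because the inequalities hold simultaneously for all $n$, the upper estimate may be tightened to its infimum and the lower estimate is in any case dominated by its own limit, so passing to $n \to \infty$ yields
\[
\sqrt{\tfrac{(1-\delta)(1-\varepsilon)}{(1+\delta)(1+\varepsilon)}}\,\|T\zeta\| \le \|T\eta\| \le \sqrt{\tfrac{(1-\delta)(1+\varepsilon)}{(1+\delta)(1-\varepsilon)}}\,\|T\zeta\|
\]
for all orthogonal unit vectors $\eta, \zeta$. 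This is the only limiting argument needed, and it is routine once the monotonicity in $n$ is observed.

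Next I would remove the unit-norm hypothesis. Given orthogonal $\eta, \zeta \in \mathscr{H}\setminus\{0\}$, the normalized vectors $\eta/\|\eta\|$ and $\zeta/\|\zeta\|$ are again orthogonal unit vectors, so the displayed unit-vector inequality applies to them. Invoking the (positive) homogeneity of $T$, inherited from the linearity underlying Proposition \ref{pr.2}, gives $\|T(\eta/\|\eta\|)\| = \|T\eta\|/\|\eta\|$ and likewise for $\zeta$; substituting these and clearing the denominators $\|\eta\|,\|\zeta\|$ converts the unit-vector inequality into exactly the asserted bound. There is no genuine obstacle here: the two points deserving care are the asymmetric way the two inequalities survive the limit (the upper one sharpening to its infimum, the lower one merely persisting) and the reliance on homogeneity in the normalization step, without which the scalars $1/\|\eta\|$ and $1/\|\zeta\|$ could not be pulled through $T$.
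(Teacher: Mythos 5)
Your proof is correct and is exactly the intended derivation: the paper states Corollary \ref{cr.3} without proof as an immediate consequence of Proposition \ref{pr.2}, obtained precisely by letting $n\to\infty$ (the upper bound tightening to its infimum, the lower bound being implied by any finite $n$) and then normalizing. You are also right to flag that homogeneity of $T$ is what licenses the normalization step; this matches the linearity hypothesis of Proposition \ref{pr.2}, even though the corollary's statement omits the word \emph{linear}.
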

\begin{theorem}\label{th.4}
Let $T: \mathscr{H} \longrightarrow \mathscr{K}$ be a $(\delta, \varepsilon)$-orthogonality preserving mapping. Then
$T$ is injective, continuous and satisfies
$$\frac{1}{\theta}\gamma\|\eta\|\leq \|T\eta\|\leq \theta \gamma\|\eta\|$$
for all $\eta\in \mathscr{H}$, $\gamma\in \big[\,[T], \|T\|\,\big]$ and $\theta = \sqrt{\frac{(1 - \delta)(1 + \varepsilon)}{(1 + \delta)(1 - \varepsilon)} + 2\varepsilon \sqrt{\frac{(1 - \delta)(1 + \varepsilon)}{(1 + \delta)(1 - \varepsilon)}}}$.
\end{theorem}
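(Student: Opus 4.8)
The plan is to reduce the whole statement to the single scale estimate
\[
\|T\zeta\|\le\theta\,\|T\eta\|\qquad\text{for all unit vectors }\eta,\zeta\in\mathscr{H},
\]
which is equivalent to $\|T\|\le\theta\,[T]$. Indeed, taking the supremum over $\zeta$ and the infimum over $\eta$ in the displayed inequality gives $\|T\|\le\theta[T]$. Conversely, from the scale estimate all the numbers $\|T\eta\|$ (over unit $\eta$) are comparable up to the factor $\theta$; since $T\neq0$ at least one of them is positive, so $0<[T]\le\|T\|<\infty$, whence $T$ is continuous and injective. Moreover, for every $\eta$ and every $\gamma\in[[T],\|T\|]$ one then has $\|T\eta\|\le\|T\|\,\|\eta\|\le\theta[T]\|\eta\|\le\theta\gamma\|\eta\|$ and $\|T\eta\|\ge[T]\|\eta\|\ge\tfrac1\theta\|T\|\,\|\eta\|\ge\tfrac1\theta\gamma\|\eta\|$, which is exactly the asserted two-sided bound. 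Thus it suffices to establish the scale estimate.

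To prove it, write $M:=\sqrt{\tfrac{(1-\delta)(1+\varepsilon)}{(1+\delta)(1-\varepsilon)}}$, so that $\theta=\sqrt{M^{2}+2\varepsilon M}$, and fix unit vectors $\eta,\zeta$. If they are linearly dependent the estimate is immediate since $\|T\zeta\|=\|T\eta\|$ and $\theta\ge1$. Otherwise I would apply Gram--Schmidt to produce a unit vector $\xi\in\operatorname{span}\{\eta,\zeta\}$ with $\xi\perp\eta$, and write $\zeta=a\eta+b\xi$ with $|a|^{2}+|b|^{2}=1$. Expanding gives
\[
\|T\zeta\|^{2}=|a|^{2}\|T\eta\|^{2}+|b|^{2}\|T\xi\|^{2}+2\operatorname{Re}\!\big(a\bar b\,(T\eta,T\xi)\big).
\]
Since $\eta\perp\xi$ we have $\eta\perp^{\delta}\xi$, so the $(\delta,\varepsilon)$-orthogonality preserving hypothesis yields $|(T\eta,T\xi)|\le\varepsilon\|T\eta\|\,\|T\xi\|$, while Corollary \ref{cr.3}, applied to the orthonormal pair $\eta,\xi$, yields $\|T\xi\|\le M\|T\eta\|$.

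Feeding these two bounds into the expansion and using $|a|,|b|\le1$, together with $M\ge1$ (so that $|a|^{2}+M^{2}|b|^{2}\le M^{2}$), one obtains
\[
\|T\zeta\|^{2}\le\big(|a|^{2}+M^{2}|b|^{2}+2\varepsilon M|a||b|\big)\|T\eta\|^{2}\le\big(M^{2}+2\varepsilon M\big)\|T\eta\|^{2}=\theta^{2}\|T\eta\|^{2},
\]
which is the desired scale estimate. I would record at the start that $M\ge1$: for a nonzero $T$ and an orthonormal pair, Corollary \ref{cr.3} forces $\|T\eta\|\le M\|T\xi\|\le M^{2}\|T\eta\|$, hence $M\ge1$, and this is precisely what makes $|a|^{2}+M^{2}|b|^{2}\le M^{2}$ and $\theta\ge1$ legitimate.

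The point I expect to be the main obstacle is organising the estimate in the correct direction. One must \emph{upper} bound the norm of the image of the decomposed vector, so that the cross term $2\operatorname{Re}(a\bar b\,(T\eta,T\xi))$ enters with a plus sign and is controlled by $2\varepsilon M\|T\eta\|^{2}$. The tempting alternative of bounding $\|T\zeta\|$ from below produces a negative cross term, and minimising the resulting expression $|a|^{2}+|b|^{2}r^{2}-2\varepsilon|a||b|\,r$ over the admissible ratio $r=\|T\xi\|/\|T\eta\|\in[M^{-1},M]$ does \emph{not} return the clean constant $\theta$ (and can be strictly worse). Choosing the right direction is what collapses the argument to the one-line bound above and delivers exactly $\theta=\sqrt{M^{2}+2\varepsilon M}$.
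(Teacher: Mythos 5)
Your proof is correct and is essentially the paper's own argument: both decompose one vector along the other plus an orthogonal unit vector, expand the squared norm of its image under $T$, bound the cross term via the $(\delta,\varepsilon)$-preserving hypothesis applied to genuinely orthogonal vectors and the orthogonal component via Corollary \ref{cr.3}, obtaining exactly $\theta^{2}=M^{2}+2\varepsilon M$, and then both deduce continuity, injectivity and the two-sided bound from $\tfrac{1}{\theta}\|T\|\leq\|T\eta\|/\|\eta\|\leq\theta\,[T]$. One small point in your favour: you justify $M\geq 1$ explicitly, whereas the paper uses it tacitly (in the step bounding $(M^{2}-1)\|\eta_{2}\|^{2}$ by $(M^{2}-1)\|\eta\|^{2}$).
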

\begin{proof}
Let $\eta, \zeta\in \mathscr{H}\setminus\{0\}$.
Choose $\eta_1, \eta_2\in \mathscr{H}\setminus\{0\}$ such that
\begin{align}\label{id.4.2}
\eta = \eta_1 + \eta_2, \quad \eta_1\in\{\lambda \zeta: \lambda\in\mathbb{C}\}, \quad |(\eta_1, \eta_2)| = 0\leq \delta \|\eta_1\|\,\|\eta_2\|,
\end{align}
whence
\begin{align}\label{id.4.3}
\|\eta\|^2 = \|\eta_1\|^2 + \|\eta_2\|^2, \quad \|\eta_1\|\leq \|\eta\|, \quad \|\eta_2\|\leq\|\eta\|.
\end{align}
By Corollary \ref{cr.3}, we get
\begin{align}\label{id.4.5}
\sqrt{\frac{(1 - \delta)(1 - \varepsilon)}{(1 + \delta)(1 + \varepsilon)}}\|T\eta_1\|\,\|\eta_2\|\leq \|T\eta_2\|\,\|\eta_1\|\leq \sqrt{\frac{(1 - \delta)(1 + \varepsilon)}{(1 + \delta)(1 - \varepsilon)}} \|T\eta_1\|\,\|\eta_2\|.
\end{align}
So, we reach
\begin{align*}
\|T\eta\|^2 &= \|T\eta_1\|^2 + 2Re(T\eta_1, T\eta_2) + \|T\eta_2\|^2 \qquad( \mbox{ since $\eta = \eta_1 + \eta_2$} )
\\& \leq \|T\eta_1\|^2 + 2|(T\eta_1, T\eta_2)| + \|T\eta_2\|^2
\\&  \qquad\big(\mbox{ since $\frac{\|T\eta_1\|}{\|\eta_1\|} = \frac{\|T\zeta\|}{\|\zeta\|}$, $|(\eta_1, \eta_2)|\leq \delta \|\eta_1\|\,\|\eta_2\|$,}
\\&  \qquad\mbox{$T$ is a $(\delta, \varepsilon)$-orthogonality preserving mapping and (\ref{id.4.5})}\big)
\\& \leq \frac{\|T\zeta\|^2}{\|\zeta\|^2} \, \|\eta_1\|^2 + 2\varepsilon\|T\eta_1\|\,\|T\eta_2\| + \frac{(1 - \delta)(1 + \varepsilon)}{(1 + \delta)(1 - \varepsilon)}\frac{\|T\eta_1\|^2}{\|\eta_1\|^2} \, \|\eta_2\|^2
\\& \qquad\qquad\qquad\qquad\qquad\big(\mbox{ by (\ref{id.4.3}), (\ref{id.4.5}) and since $\frac{\|T\eta_1\|}{\|\eta_1\|} = \frac{\|T\zeta\|}{\|\zeta\|}$}\big)
\\& \leq \frac{\|T\zeta\|^2}{\|\eta\|^2}(\|\eta\|^2 - \|\eta_2\|^2) + 2\varepsilon \sqrt{\frac{(1 - \delta)(1 + \varepsilon)}{(1 + \delta)(1 - \varepsilon)}} \|T\eta_1\|^2 \times \frac{\|\eta_2\|}{\|\eta_1\|}
\\& \quad+ \frac{(1 - \delta)(1 + \varepsilon)}{(1 + \delta)(1 - \varepsilon)}\frac{\|T\zeta\|^2}{\|\zeta\|^2}\|\eta_2\|^2
\\& \qquad\qquad \qquad\qquad\qquad\qquad\qquad( \mbox{ since $\frac{\|T\eta_1\|}{\|\eta_1\|} = \frac{\|T\zeta\|}{\|\zeta\|}$})
\\& \leq \frac{\|T\zeta\|^2}{\|\zeta\|^2}\|\eta\|^2 + 2\varepsilon \sqrt{\frac{(1 - \delta)(1 + \varepsilon)}{(1 + \delta)(1 - \varepsilon)}} \frac{\|T\zeta\|^2}{\|\zeta\|^2}\|\eta_1\|\,\|\eta_2\|\\
&\quad + \left(\frac{(1 - \delta)(1 + \varepsilon)}{(1 + \delta)(1 - \varepsilon)} - 1\right)\frac{\|T\zeta\|^2}{\|\zeta\|^2}\|\eta_2\|^2
\\&  \qquad\qquad\qquad\qquad\qquad( \mbox{since $\|\eta_1\|\leq\|\eta\|$  and $\|\eta_2\|\leq\|\eta\|$})
\\& \leq \frac{\|T\zeta\|^2}{\|\zeta\|^2}\,\|\eta\|^2\left(1 + 2\varepsilon \sqrt{\frac{(1 - \delta)(1 + \varepsilon)}{(1 + \delta)(1 - \varepsilon)}} + \left(\frac{(1 - \delta)(1 + \varepsilon)}{(1 + \delta)(1 - \varepsilon)} - 1\right)\right)
\\& = \frac{\|T\zeta\|^2}{\|\zeta\|^2}\,\|\eta\|^2\left[\frac{(1 - \delta)(1 + \varepsilon)}{(1 + \delta)(1 - \varepsilon)} + 2\varepsilon \sqrt{\frac{(1 - \delta)(1 + \varepsilon)}{(1 + \delta)(1 - \varepsilon)}}\right].
\end{align*}
Thus we have
$\|T\eta\|^2\leq \frac{\|T\zeta\|^2}{\|\zeta\|^2}\,\|\eta\|^2 \theta^2 $
and hence $\frac{\|T\eta\|}{\|\eta\|} \leq \theta \frac{\|T\zeta\|}{\|\zeta\|}$. Since $\eta$ and $\zeta$ are arbitrary, we change the order to get
$\frac{\|T\zeta\|}{\|\zeta\|} \leq \theta \frac{\|T\eta\|}{\|\eta\|}$ and finally
$\frac{1}{\theta}\frac{\|T\zeta\|}{\|\zeta\|}\leq \frac{\|T\eta\|}{\|\eta\|}\leq \theta \frac{\|T\zeta\|}{\|\zeta\|}.$
Hence $T$ is continuous and $\frac{1}{\theta}\|T\| \leq \frac{\|T\eta\|}{\|\eta\|}\leq \theta [T].$

Now, for all $\eta\in \mathscr{H}$ and for all $\gamma\in \big[\,[T], \|T\|\,\big]$, we reach
$$\frac{1}{\theta}\gamma\|\eta\|\leq \frac{1}{\theta}\|T\|\|\eta\|\leq \|T\eta\|\leq \theta [T]\|\eta\|\leq \theta \gamma\|\eta\|.$$
Thus $T$ is injective and
$\frac{1}{\theta}\gamma\|\eta\|\leq \|T\eta\|\leq \theta \gamma\|\eta\|.$
\end{proof}
The following lemma is a consequences of the discussion in the first section.
\begin{lemma}\label{le.4}
Let $\delta, \varepsilon\in[0, 1)$. Let $\mathscr{E}$ be inner product $\mathscr{A}$-module with $\mathbb{K}(\mathscr{H})\subseteq \mathscr{A} \subseteq \mathbb{B}(\mathscr{H})$ and let $\mathscr{E}$ be any minimal projection. Then the following statements hold:
\begin{itemize}
    \item[(i)] $x, y \in \mathscr{E}_e$ are $\delta$-orthogonal in $\mathscr{E}_e$ if and only if they are $\delta$-orthogonal in $\mathscr{E}$.
    \item[(ii)] If $T: \mathscr{E} \longrightarrow \mathscr{F}$ is an $\mathscr{A}$-linear $(\delta, \varepsilon)$-orthogonality preserving mapping, then $T_e : = T{\mid}_{\mathscr{E}_e}: \mathscr{E}_e \longrightarrow \mathscr{F}_e$ is a linear $(\delta, \varepsilon)$-orthogonality preserving mapping.
\end{itemize}
\end{lemma}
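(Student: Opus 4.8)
The plan is to reduce everything to the two facts recorded in the first section: for $x,y\in\mathscr{E}_e$ one has $\langle x,y\rangle=(x,y)e$, and the two norms coincide, $\|x\|_{\mathscr{E}_e}=\|x\|_{\mathscr{E}}$. Since $e=\eta\otimes\eta$ with $\eta$ a unit vector is a projection, $\|e\|=1$, so taking $C^*$-norms in $\langle x,y\rangle=(x,y)e$ yields $\|\langle x,y\rangle\|=|(x,y)|$. For part (i) I would simply write out both notions of $\delta$-orthogonality side by side: the condition in $\mathscr{E}_e$ reads $|(x,y)|\le\delta\|x\|_{\mathscr{E}_e}\|y\|_{\mathscr{E}_e}$, while the condition in $\mathscr{E}$ reads $\|\langle x,y\rangle\|\le\delta\|x\|_{\mathscr{E}}\|y\|_{\mathscr{E}}$. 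Substituting $\|\langle x,y\rangle\|=|(x,y)|$ and $\|\cdot\|_{\mathscr{E}_e}=\|\cdot\|_{\mathscr{E}}$ turns these into literally the same inequality, so the two notions are equivalent. This settles (i) with essentially no computation.

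For part (ii) I would first check that $T_e$ is well defined and linear, and only then invoke (i) on both ends. Every $z\in\mathscr{E}_e$ satisfies $ze=z$, because $e$ is idempotent; hence by $\mathscr{A}$-linearity $T_e z=T(ze)=(Tz)e\in\mathscr{F}_e$, so $T$ really does restrict to a map $\mathscr{E}_e\longrightarrow\mathscr{F}_e$. Linearity of $T_e$ is then inherited from the $\mathbb{C}$-linearity of $T$ together with the observation that $\mathscr{E}_e$ is a complex subspace of $\mathscr{E}$ (as $\alpha x_1 e+\beta x_2 e=(\alpha x_1+\beta x_2)e$ for $\alpha,\beta\in\mathbb{C}$). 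The orthogonality-preserving property follows by chaining part (i) on both sides: if $x,y\in\mathscr{E}_e$ are $\delta$-orthogonal in $\mathscr{E}_e$, then by (i) they are $\delta$-orthogonal in $\mathscr{E}$; since $T$ is $(\delta,\varepsilon)$-orthogonality preserving on the module, $Tx$ and $Ty$ are $\varepsilon$-orthogonal in $\mathscr{F}$; and since $Tx,Ty\in\mathscr{F}_e$, applying (i) to the slice $\mathscr{F}_e$ shows they are $\varepsilon$-orthogonal in $\mathscr{F}_e$, i.e. $T_e x\perp^{\varepsilon}T_e y$.

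There is no serious obstacle here; the lemma is essentially a bookkeeping statement transferring the module-level definitions to the Hilbert-space slice $\mathscr{E}_e$. The only points requiring a little care are the well-definedness of $T_e$, where one must use both that $e$ is idempotent and that $T$ is $\mathscr{A}$-linear (not merely $\mathbb{C}$-linear) in order to land back in $\mathscr{F}_e$, and the implicit use of $\|e\|=1$ when identifying the scalar modulus $|(x,y)|$ with the module norm $\|\langle x,y\rangle\|$.
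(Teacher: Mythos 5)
Your proof is correct and follows essentially the same route as the paper: part (i) is the same chain of equivalences via $\langle x,y\rangle=(x,y)e$ and the coincidence of norms, and part (ii) is the same two-sided application of (i) around the $(\delta,\varepsilon)$-preserving property of $T$. Your explicit check that $T_e$ is well defined (landing in $\mathscr{F}_e$ via $\mathscr{A}$-linearity and idempotence of $e$) is a detail the paper leaves implicit, but it does not change the argument.
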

\begin{proof}
(i) Let $x, y \in \mathscr{E}_e$. Then
\begin{align*}
x\perp^\delta y \,\,\mbox{in}\,\, \mathscr{E}_e &\Leftrightarrow |(x, y)|\leq \delta{\|x\|}_{\mathscr{E}_e}{\|y\|}_{\mathscr{E}_e}
\Leftrightarrow \|\langle x, y\rangle\|\leq \delta\|x\|_{\mathscr{E}}\|y\|_{\mathscr{E}}\\
&\Leftrightarrow x\perp^\delta y \,\,\mbox{in}\,\, \mathscr{E}.
\end{align*}
(ii) Let $x\perp^\delta y$ in $\mathscr{E}_e$. By (i), $x\perp^\delta y$ in $\mathscr{E}$. Since $T$ is $(\delta, \varepsilon)$-orthogonality preserving, hence $Tx\perp^\varepsilon Ty$ in $\mathscr{F}$. So, by (i), ${T_e}x\perp^\varepsilon {T_e}y$ in $\mathscr{F}_e$. Thus $T_e$ is a linear $(\delta, \varepsilon)$-orthogonality preserving mapping.
\end{proof}
A part of the following lemma can be found in \cite[Proposition 3.3]{I.T}. We, however, prove it
for the sake of completeness.
\begin{proposition}\label{pr.5}
Let $\mathscr{E}, \mathscr{F}$ be inner product $\mathscr{A}$-modules with $\mathbb{K}(\mathscr{H})\subseteq \mathscr{A} \subseteq \mathbb{B}(\mathscr{H})$ and $T: \mathscr{E} \longrightarrow \mathscr{F}$ be an $\mathscr{A}$-linear mapping. Suppose that  $T_e : = T{\mid}_{\mathscr{E}_e}: \mathscr{E}_e \longrightarrow \mathscr{F}_e$ for some minimal projection $\mathscr{E}$, such that $0<[T_e]\leq\|T_e\|<\infty$. Then
\begin{itemize}
   \item[(i)] $[T] = [T_e]$.
   \item[(ii)] $\|T\| = \|T_e\|$.
\end{itemize}
\end{proposition}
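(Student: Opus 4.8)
The plan is to sandwich $\|Tx\|$ between $[T_e]\|x\|$ and $\|T_e\|\,\|x\|$ for every $x\in\mathscr{E}$, by ``scanning'' $x$ through all the one-dimensional fibres $\mathscr{E}_f$ attached to minimal projections $f=\zeta\otimes\zeta$. Two of the four needed inequalities are free: since $\mathscr{E}_e\subseteq\mathscr{E}$ and $\|\cdot\|_{\mathscr{E}_e}=\|\cdot\|_{\mathscr{E}}$, restricting the supremum defining $\|T\|$ and the infimum defining $[T]$ to $\mathscr{E}_e$ gives at once $\|T_e\|\le\|T\|$ and $[T]\le[T_e]$. The real content is the reverse estimates $\|T\|\le\|T_e\|$ and $[T_e]\le[T]$.

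First I would record the fibre formula for the module norm. For a unit vector $\zeta\in\mathscr{H}$ and $f=\zeta\otimes\zeta$, the identity $(\zeta\otimes\zeta)\,a\,(\zeta\otimes\zeta)=(a\zeta,\zeta)\,\zeta\otimes\zeta$ recalled in the introduction, applied to $a=\langle x,x\rangle$, yields $\langle xf,xf\rangle=f\langle x,x\rangle f=(\langle x,x\rangle\zeta,\zeta)\,f$, so that $\|xf\|^2=(\langle x,x\rangle\zeta,\zeta)$. As $\langle x,x\rangle\ge 0$, taking the supremum over unit vectors $\zeta$ gives $\sup_f\|xf\|^2=\|\langle x,x\rangle\|=\|x\|^2$, the supremum running over all minimal projections $f$ (each lying in $\mathbb{K}(\mathscr{H})\subseteq\mathscr{A}$). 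The same identity applied to $Tx\in\mathscr{F}$ gives $\|Tx\|^2=\sup_f\|(Tx)f\|^2$, and by $\mathscr{A}$-linearity $(Tx)f=T(xf)=T_f(xf)$, where $xf\in\mathscr{E}_f$.

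The step I expect to be the crux is showing that $\|T_f\|$ and $[T_f]$ are independent of the minimal projection $f$, which is what allows me to trade the fixed $e$ for an arbitrary $f$. Given $e=\eta\otimes\eta$ and $f=\zeta\otimes\zeta$, the partial isometry $v=\zeta\otimes\eta\in\mathbb{K}(\mathscr{H})\subseteq\mathscr{A}$ satisfies $v^*v=e$ and $vv^*=f$, and right multiplication by $v^*$ restricts to surjective $\mathscr{A}$-module isometries $\mathscr{E}_e\to\mathscr{E}_f$ and $\mathscr{F}_e\to\mathscr{F}_f$: for $y\in\mathscr{E}_e$ one checks $\langle yv^*,yv^*\rangle=v\langle y,y\rangle v^*=\|y\|^2 f$, with inverse given by right multiplication by $v$. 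Since $T$ is $\mathscr{A}$-linear, $T_f(yv^*)=(T_e y)v^*$, so these isometries intertwine $T_e$ and $T_f$; hence $\|T_f\|=\|T_e\|$ and $[T_f]=[T_e]$ for every minimal projection $f$, and by hypothesis these common values are finite and positive.

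Finally I would assemble the pieces. For every $x\in\mathscr{E}$ and every minimal projection $f$, the bounds $[T_f]\|xf\|\le\|T_f(xf)\|\le\|T_f\|\,\|xf\|$, together with $\|T_f(xf)\|=\|(Tx)f\|$ and the equalities $[T_f]=[T_e]$, $\|T_f\|=\|T_e\|$, give $[T_e]\|xf\|\le\|(Tx)f\|\le\|T_e\|\,\|xf\|$. Taking the supremum over $f$ and invoking the fibre norm formulas $\sup_f\|xf\|=\|x\|$ and $\sup_f\|(Tx)f\|=\|Tx\|$ yields $[T_e]\|x\|\le\|Tx\|\le\|T_e\|\,\|x\|$ for all $x$. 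The right inequality gives $\|T\|\le\|T_e\|$ and the left one gives $[T]\ge[T_e]$; combined with the two free inequalities above, this proves $\|T\|=\|T_e\|$ and $[T]=[T_e]$.
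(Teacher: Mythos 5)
Your proof is correct, and it is actually more complete than the paper's own argument, although it is built from the same raw materials (rank-one partial isometries linking minimal projections, and the formula $\|a\|=\sup_f\|faf\|$ for $a\ge 0$). The paper fixes one pair of minimal projections $e=\zeta\otimes\zeta$, $f=\eta\otimes\eta$, sets $u=\eta\otimes\zeta$, and in a single chain of identities derives $e\langle Tu,Tu\rangle e\ge[T_e]^2\|u\|^2e$, hence $[T_e]\|u\|\le\|Tu\|$, concluding $[T_e]\le[T]$ (with part (ii) declared ``similar''). Read literally, that computation applies $T$ to the operator $u$ itself, i.e.\ it treats $u=\eta\otimes\zeta$ simultaneously as an element of $\mathscr{A}$ and as a vector of $\mathscr{E}$ --- which is meaningful only when such rank-one operators happen to lie in the module --- and even then it establishes the lower bound $[T_e]\|u\|\le\|Tu\|$ only for those special elements $u$, which does not by itself control $\|Tx\|$ for arbitrary $x\in\mathscr{E}$. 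Your argument supplies exactly the two steps that close this gap: first, the intertwining surjective isometries $y\mapsto yv^*$ (with $v^*v=e$, $vv^*=f$) proving $\|T_f\|=\|T_e\|$ and $[T_f]=[T_e]$ for \emph{every} minimal projection $f$; second, the fibre formulas $\|x\|=\sup_f\|xf\|$ and $\|Tx\|=\sup_f\|(Tx)f\|$, so that taking the supremum over $f$ gives the two-sided bound $[T_e]\|x\|\le\|Tx\|\le\|T_e\|\,\|x\|$ for every $x\in\mathscr{E}$, from which both equalities follow via the trivial restriction inequalities. What the paper's route buys is brevity (it is essentially the Ili\v{s}evi\'{c}--Turn\v{s}ek computation compressed into one display); what yours buys is a proof valid verbatim for arbitrary inner product $\mathscr{A}$-modules, with the fibre-transitivity step isolated as a reusable lemma.
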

\begin{proof}
(i) Let $e = \zeta\otimes\zeta$, $f = \eta\otimes\eta$ be minimal projections and let $u = \eta\otimes\zeta$. We have
\begin{align*}
e\langle Tu, Tu\rangle e &= \langle T(ue), T(ue)\rangle = (T(ue), T(ue))e
\\& = {\|T(ue)\|}_{{\mathscr{E}_f}}^2e\geq [T_e]^2\|ue\|^2e = [T_e]^2\Big\|(\eta\otimes\zeta)(\zeta\otimes\zeta)\Big\|^2e\\
 &= [T_e]^2\Big\|\,\|\zeta\|^2\eta\otimes\zeta\Big\|^2e = [T_e]^2\|u\|^2e.
\end{align*}
Hence
\begin{align*}
[T_e]^2\|u\|^2 &\leq \|e\langle Tu, Tu\rangle e\| \leq \sup\{\|e\langle Tu, Tu\rangle e\|: \|e\| = 1\} = \|Tu\|^2.
\end{align*}
Hence $[T_e]\|u\| \leq \|Tu\|$, which shows $[T_e]\leq[T]$. Since $[T_e] \geq [T]$, thus we reach $[T_e] = [T]$.\\
(ii) The proof is similar to (i).
\end{proof}
We are now in a position to establish one of our main results. In fact, in the sequel we provide a version of Theorem \ref{th.4} in the setting of inner product $C^{*}$-modules.
\begin{theorem}\label{th.6}
Let $\delta, \varepsilon\in[0, 1)$. Let $\mathscr{E}, \mathscr{F}$ be inner product $\mathscr{A}$-modules with $\mathbb{K}(\mathscr{H})\subseteq \mathscr{A} \subseteq \mathbb{B}(\mathscr{H})$ and let $T: \mathscr{E} \longrightarrow \mathscr{F}$ be a nonzero $\mathscr{A}$-linear $(\delta, \varepsilon)$-orthogonality preserving mapping. Then
\begin{itemize}
   \item[(i)] $0<[T]\leq\|T\|<\infty$.
   \item[(ii)] $\frac{1}{\theta^2}\gamma^2\langle x, x\rangle\leq \langle Tx, Tx\rangle\leq \theta^2 \gamma^2 \langle x, x\rangle$\\
                for all $x\in \mathscr{E}$, $\gamma\in \big[\,[T], \|T\|\,\big]$ and $\theta = \sqrt{\frac{(1 - \delta)(1 + \varepsilon)}{(1 + \delta)(1 - \varepsilon)} + 2\varepsilon \sqrt{\frac{(1 - \delta)(1 + \varepsilon)}{(1 + \delta)(1 - \varepsilon)}}}$.
   \item[(iii)] $\big\|\langle Tx, Ty\rangle - {\gamma}^2\langle x, y\rangle\big\|\leq 4(1 - \frac{1}{\theta^2}) \min\big\{\gamma^2\|x\|\,\|y\|, \|Tx\|\,\|Ty\|\big\}$\\
                for all $x, y\in \mathscr{E}$ and for all $\gamma\in \big[\,[T], \|T\|\,\big]$.
\end{itemize}
\end{theorem}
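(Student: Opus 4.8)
The plan is to reduce everything to the Hilbert space statement already obtained in Theorem~\ref{th.4}, by means of the minimal--projection machinery of Lemma~\ref{le.4} and Proposition~\ref{pr.5}, and then to upgrade the resulting scalar (vector--state) inequalities to $\mathscr{A}$--valued ones.

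For (i) and (ii) I would fix a unit vector $\eta\in\mathscr{H}$ and put $e=\eta\otimes\eta$. By Lemma~\ref{le.4}(ii) the restriction $T_e=T|_{\mathscr{E}_e}\colon\mathscr{E}_e\to\mathscr{F}_e$ is a linear $(\delta,\varepsilon)$--orthogonality preserving map between the Hilbert spaces $\mathscr{E}_e,\mathscr{F}_e$, so Theorem~\ref{th.4} applies to $T_e$: it is injective and continuous, $0<[T_e]\le\|T_e\|<\infty$, and $\tfrac1\theta\gamma\|u\|\le\|T_eu\|\le\theta\gamma\|u\|$ for all $u\in\mathscr{E}_e$ and $\gamma\in[[T_e],\|T_e\|]$, with the very $\theta$ in the statement. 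Proposition~\ref{pr.5} then gives $[T]=[T_e]$ and $\|T\|=\|T_e\|$, which proves (i) and shows that the admissible range of $\gamma$ is the same for $T$ and for every $T_e$. For (ii) I would take $u=xe$ and use $\mathscr{A}$--linearity in the form $T(xe)=(Tx)e$ together with the identity $e\langle z,z\rangle e=(\langle z,z\rangle\eta,\eta)\,e$ to rewrite $\|xe\|^2=(\langle x,x\rangle\eta,\eta)$ and $\|T_e(xe)\|^2=(\langle Tx,Tx\rangle\eta,\eta)$. Squaring the two--sided estimate of Theorem~\ref{th.4} then reads
\[
\tfrac1{\theta^2}\gamma^2(\langle x,x\rangle\eta,\eta)\le(\langle Tx,Tx\rangle\eta,\eta)\le\theta^2\gamma^2(\langle x,x\rangle\eta,\eta),
\]
and since $\eta$ is an arbitrary unit vector and operator order in $\mathbb{B}(\mathscr{H})$ is tested against vector states, this is exactly the operator inequality (ii).

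For (iii) the starting point is that (ii) says the two $\mathscr{A}$--valued sesquilinear forms $P(x,y)=\langle Tx,Ty\rangle-\tfrac1{\theta^2}\gamma^2\langle x,y\rangle$ and $N(x,y)=\theta^2\gamma^2\langle x,y\rangle-\langle Tx,Ty\rangle$ are positive semidefinite. I would apply the Cauchy--Schwarz inequality for positive $\mathscr{A}$--valued forms, $\|P(x,y)\|^2\le\|P(x,x)\|\,\|P(y,y)\|$ (and likewise for $N$), and estimate the diagonal terms in two ways: the upper bound in (ii) controls $P(x,x)$ and $N(x,x)$ by multiples of $\gamma^2\langle x,x\rangle$, while the lower bound controls them by multiples of $\langle Tx,Tx\rangle$. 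Writing the target $\langle Tx,Ty\rangle-\gamma^2\langle x,y\rangle$ as a fixed (convex) combination of $P$ and $N$ that cancels the diagonal scaling --- equivalently, via the polarization identity for the module inner product --- and then invoking the triangle inequality produces a bound of the shape $\mathrm{const}\cdot\min\{\gamma^2\|x\|\,\|y\|,\ \|Tx\|\,\|Ty\|\}$, where the two entries of the $\min$ are precisely the two diagonal estimates.

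The main obstacle is (iii). Because a Hilbert $C^*$--module has no parallelogram law, the off--diagonal estimate cannot be read off from the diagonal operator inequality (ii) by a one--line norm computation as in the Hilbert space case; the Cauchy--Schwarz inequality for positive $\mathscr{A}$--valued forms is what substitutes for it. I expect the genuinely delicate point to be the constant bookkeeping: one must arrange the combination of $P$ and $N$ so that both diagonal estimates emerge with the \emph{same} factor $4\bigl(1-\tfrac1{\theta^2}\bigr)$, which is what legitimises passing to the $\min$. A smaller technical point, already needed in (ii), is to justify carefully the reduction identities $T(xe)=(Tx)e$ and $e\langle z,z\rangle e=(\langle z,z\rangle\eta,\eta)\,e$ that mediate between $\mathscr{E}$ and the Hilbert space $\mathscr{E}_e$.
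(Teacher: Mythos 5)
Your handling of (i) and (ii) coincides with the paper's own proof: restrict to $\mathscr{E}_e$ via Lemma \ref{le.4}, apply Theorem \ref{th.4} to $T_e$, invoke Proposition \ref{pr.5} to get $[T]=[T_e]$ and $\|T\|=\|T_e\|$, and let the unit vector $\eta$ (equivalently the minimal projection $e=\eta\otimes\eta$) vary, so that the scalar inequalities against $(\,\cdot\,\eta,\eta)$ become the operator inequality (ii). The reduction identities you flag are indeed needed but are immediate: $T(xe)=(Tx)e$ because $e\in\mathbb{K}(\mathscr{H})\subseteq\mathscr{A}$ and $T$ is $\mathscr{A}$-linear, and $\langle xe,xe\rangle=e\langle x,x\rangle e=(\langle x,x\rangle\eta,\eta)e$.

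For (iii), however, your diagnosis of the difficulty is mistaken, and your substitute argument has a genuine gap. The ``main obstacle'' you describe does not exist: the polarization identity
\[
\phi(x,y)=\frac{1}{4}\sum_{k=0}^{3}i^{-k}\,\phi\bigl(x+i^{k}y,\,x+i^{k}y\bigr)
\]
is purely algebraic and holds for every sesquilinear $\mathscr{A}$-valued form $\phi$ on an inner product $\mathscr{A}$-module; what fails in a $C^*$-module is only the parallelogram law for the scalar norm $\|x\|=\|\langle x,x\rangle\|^{1/2}$, which is never needed. The paper's proof of (iii) is exactly the ``one-line'' computation you rule out, performed twice: first for $\phi(x,y)=\langle Tx,Ty\rangle-\gamma^{2}\langle x,y\rangle$, bounding the four diagonal terms by (ii) and specializing to $x/\|x\|,\,y/\|y\|$ (the factor $4$ is $(\|x\|+\|y\|)^{2}$ for unit vectors), which gives the $\gamma^{2}\|x\|\,\|y\|$ branch; then with (ii) inverted, $\frac{1}{\theta^{2}}\frac{1}{\gamma^{2}}\langle Tx,Tx\rangle\leq\langle x,x\rangle\leq\theta^{2}\frac{1}{\gamma^{2}}\langle Tx,Tx\rangle$, normalizing so that $\|Tx\|=\|Ty\|=1$, which gives the $\|Tx\|\,\|Ty\|$ branch; the $\min$ is then immediate.

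Your Cauchy--Schwarz route through the positive forms $P,N$ is legitimate in principle (it is essentially the method of Ili\v{s}evi\'{c} and Turn\v{s}ek \cite{I.T}), but it cannot deliver the stated constant, which is precisely the point you left open. The weights are forced: the only way to write the target as $\alpha P(x,y)-\beta N(x,y)$ is $\alpha=\frac{\theta^{2}}{1+\theta^{2}}$, $\beta=\frac{1}{1+\theta^{2}}$. With the diagonal bounds that (ii) actually yields, namely $P(x,x),N(x,x)\leq(\theta^{2}-\theta^{-2})\gamma^{2}\langle x,x\rangle$, $P(x,x)\leq(1-\theta^{-4})\langle Tx,Tx\rangle$ and $N(x,x)\leq(\theta^{4}-1)\langle Tx,Tx\rangle$, both branches of your estimate come out with the constant $\theta^{2}-\theta^{-2}=(1+\theta^{2})\bigl(1-\frac{1}{\theta^{2}}\bigr)$, which exceeds the required $4\bigl(1-\frac{1}{\theta^{2}}\bigr)$ as soon as $\theta^{2}>3$, a nonempty range of $\delta,\varepsilon$. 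So as written you obtain (iii) only with a weaker constant, and no rearrangement of the weights can fix this. One caveat worth recording: the bookkeeping is delicate even in the paper, since for $\theta\geq 1$ the two-sided inequality (ii) bounds the diagonal $\|\langle Tz,Tz\rangle-\gamma^{2}\langle z,z\rangle\|$ only by $(\theta^{2}-1)\gamma^{2}\|z\|^{2}$, not by $\bigl(1-\frac{1}{\theta^{2}}\bigr)\gamma^{2}\|z\|^{2}$ as the displayed estimate there asserts; so if you redo this step via polarization, verify that constant rather than taking it on faith.
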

\begin{proof}
Let $e = \eta\otimes\eta$ be a minimal projection.
From Lemma \ref{le.4} it follows that $T_e : = T{\mid}_{\mathscr{E}_e}: \mathscr{E}_e \longrightarrow \mathscr{F}_e$ is a linear $(\delta, \varepsilon)$-orthogonality preserving mapping. Hence Theorem \ref{th.4} implies $T_e$ is injective, $0<[T_e]\leq\|T_e\|<\infty$ and satisfies
\begin{align}\label{id.14}
\frac{1}{\theta}\gamma\|xe\|\leq \|T_e(xe)\|\leq \theta \gamma\|xe\|,
\end{align}
for all $x\in \mathscr{E}$, $\gamma\in \big[\,[T_e], \|T_e\|\,\big]$ and $\theta = \sqrt{\frac{(1 - \delta)(1 + \varepsilon)}{(1 + \delta)(1 - \varepsilon)} + 2\varepsilon \sqrt{\frac{(1 - \delta)(1 + \varepsilon)}{(1 + \delta)(1 - \varepsilon)}}}$.
Thus by Proposition \ref{pr.5}, $0<[T]\leq\|T\|<\infty$ and it follows from (\ref{id.14}) that
$$\frac{1}{\theta^2}\gamma^2(xe, xe)\leq (T_e(xe), T_e(xe))\leq \theta^2 \gamma^2(xe, xe),$$
or equivalently,
\begin{align}\label{id.16}
\Big(\frac{1}{\theta^2}\gamma^2\langle x, x\rangle \eta, \eta\Big)\leq \Big(\langle Tx, Tx\rangle \eta, \eta\Big)\leq \Big(\theta^2 \gamma^2\langle x, x\rangle \eta, \eta\Big).
\end{align}
Now (\ref{id.16}) gives
\begin{align}\label{id.17}
\frac{1}{\theta^2}\gamma^2\langle x, x\rangle\leq \langle Tx, Tx\rangle\leq \theta^2 \gamma^2 \langle x, x\rangle
\end{align}
for all $x\in \mathscr{E}$ and for all $\gamma\in \big[\,[T], \|T\|\,\big]$. Using the polar identity, we obtain
\begin{align}\label{id.18}
\big\|\langle Tx, Ty\rangle - {\gamma}^2\langle x, y\rangle\big\|& \leq \frac{1}{4}\times4 (1 - \frac{1}{\theta^2})\gamma^2(\|x\| + \|y\|)^2\\
&\leq 2(1 - \frac{1}{\theta^2})\gamma^2(\|x\|^2 + \|y\|^2).
\end{align}
Applying (\ref{id.18}) for vectors $\frac{x}{\|x\|}$ and $\frac{y}{\|y\|}$, we get
$$\Big\|\langle T(\frac{x}{\|x\|}), T(\frac{y}{\|y\|})\rangle - {\gamma}^2\langle \frac{x}{\|x\|}, \frac{y}{\|y\|}\rangle\Big\|\leq 4(1 - \frac{1}{\theta^2})\gamma^2,$$
or equivalently,
\begin{align}\label{id.19}
\big\|\langle Tx, Ty\rangle - {\gamma}^2\langle x, y\rangle\big\|\leq 4(1 - \frac{1}{\theta^2}) \gamma^2\|x\|\,\|y\|.
\end{align}
Furthermore (\ref{id.17}) implies
\begin{align}\label{id.20}
\frac{1}{\theta^2}\frac{1}{\gamma^2}\langle Tx, Tx\rangle\leq \langle x, x\rangle\leq \theta^2 \frac{1}{\gamma^2}\langle Tx, Tx\rangle.
\end{align}
Similar to (\ref{id.19}), by (\ref{id.20}) we reach
\begin{align}\label{id.21}
\big\|\langle Tx, Ty\rangle - {\gamma}^2\langle x, y\rangle\big\|\leq 4(1 - \frac{1}{\theta^2}) \|Tx\|\,\|Ty\|,
\end{align}
for all $x, y\in \mathscr{E}$ and for all $\gamma\in [\,[T], \|T\|\,]$. Thus, by (\ref{id.19}) and (\ref{id.21}), (iii) follows.
\end{proof}
Next we obtain a sufficient condition for an $\mathscr{A}$-linear mapping to be $(\delta, \varepsilon)$-orthogonality preserving.
\begin{corollary}\label{cr.6.1}
Let $\delta, \varepsilon\in[0, 1)$. Let $\mathscr{E}, \mathscr{F}$ be inner product $\mathscr{A}$-modules with $\mathbb{K}(\mathscr{H})\subseteq \mathscr{A} \subseteq \mathbb{B}(\mathscr{H})$ and let $T: \mathscr{E} \longrightarrow \mathscr{F}$ be a nonzero $\mathscr{A}$-linear such that
\begin{align*}
\frac{2\delta}{\sqrt{(4 - \varepsilon)^2 + 16\delta} - (4 - \varepsilon)}\gamma^2\langle x, x\rangle&\leq \langle Tx, Tx\rangle\\
&\leq \frac{\sqrt{(4 - \varepsilon)^2 + 16\delta} - (4 - \varepsilon)}{2\delta} \gamma^2 \langle x, x\rangle
\end{align*}
for all $x\in \mathscr{E}$ and for some $\gamma\in \big[\,[T], \|T\|\,\big]$. Then $T$ is $(\delta, \varepsilon)$-orthogonality preserving.
\end{corollary}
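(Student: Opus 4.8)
The plan is to collapse the two rational coefficients into a single parameter and to recognise that parameter as the root of a quadratic designed so that the terminal constant is exactly $\varepsilon$. Set
\[
\theta^2 := \frac{\sqrt{(4-\varepsilon)^2 + 16\delta} - (4-\varepsilon)}{2\delta},
\]
which is the coefficient in the upper hypothesis; the coefficient in the lower hypothesis is by definition $1/\theta^2$, so the assumption reads $\frac{1}{\theta^2}\gamma^2\langle x,x\rangle \le \langle Tx, Tx\rangle \le \theta^2\gamma^2\langle x,x\rangle$ for all $x\in\mathscr{E}$. Clearing the square root shows $\theta^2$ is the positive root of $\delta t^2 + (4-\varepsilon)t - 4 = 0$; dividing this relation by $\theta^2$ yields the algebraic identity $\delta\theta^2 + 4\big(1 - \tfrac{1}{\theta^2}\big) = \varepsilon$, which is the linchpin of the whole argument. (For $\delta = 0$ one reads off $\theta^2 = \frac{4}{4-\varepsilon}$ as the limiting value and the same identity persists; since the two bounds are mutually reciprocal the hypothesis is non-vacuous only when $\theta^2 \ge 1$, i.e. $\delta \le \varepsilon$.)

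Next I would observe that the displayed operator estimate is literally inequality (\ref{id.17}) of Theorem \ref{th.6} with $\theta^2$ in the role of the constant there. The derivation running from (\ref{id.17}) through (\ref{id.21}) in the proof of Theorem \ref{th.6} uses only this two-sided operator inequality together with the polar identity; it does not reinvoke the $(\delta,\varepsilon)$-orthogonality preserving hypothesis. Hence that computation applies verbatim and produces
\[
\big\|\langle Tx, Ty\rangle - \gamma^2\langle x,y\rangle\big\| \le 4\Big(1 - \tfrac{1}{\theta^2}\Big)\,\|Tx\|\,\|Ty\| \qquad (x,y\in\mathscr{E}).
\]

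Finally I would test the orthogonality condition directly. Fix $x,y\in\mathscr{E}$ with $x\perp^\delta y$, so $\|\langle x,y\rangle\| \le \delta\|x\|\,\|y\|$; I also record that the lower operator bound gives $\tfrac{1}{\theta^2}\gamma^2\|x\|^2 \le \|Tx\|^2$, hence $\gamma^2\|x\|\,\|y\| \le \theta^2\|Tx\|\,\|Ty\|$. Combining the triangle inequality with the displayed estimate and these two facts,
\[
\|\langle Tx, Ty\rangle\| \le \gamma^2\|\langle x,y\rangle\| + 4\Big(1 - \tfrac{1}{\theta^2}\Big)\|Tx\|\,\|Ty\| \le \Big(\delta\theta^2 + 4\big(1 - \tfrac{1}{\theta^2}\big)\Big)\|Tx\|\,\|Ty\|,
\]
and by the identity from the first paragraph the bracket equals $\varepsilon$. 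Thus $\|\langle Tx, Ty\rangle\| \le \varepsilon\|Tx\|\,\|Ty\|$, i.e. $Tx\perp^\varepsilon Ty$, which is exactly the $(\delta,\varepsilon)$-orthogonality preserving property. The only genuinely delicate point is the first step, namely verifying that $\theta^2$ is precisely the root making $\delta\theta^2 + 4(1-\theta^{-2})$ equal to $\varepsilon$; everything after that is bookkeeping built on Theorem \ref{th.6}.
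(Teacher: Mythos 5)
Your proposal is correct and matches the paper's own proof essentially step for step: the paper likewise invokes the computation from Theorem \ref{th.6}(iii) (which indeed uses only the two-sided operator inequality) to get $\big\|\langle Tx,Ty\rangle-\gamma^2\langle x,y\rangle\big\|\le 4\big(1-\tfrac{1}{\theta^2}\big)\|Tx\|\,\|Ty\|$, then applies the triangle inequality together with the lower operator bound to replace $\gamma^2\delta\|x\|\,\|y\|$ by $\delta\theta^2\|Tx\|\,\|Ty\|$. The only difference is presentational: you make explicit, via the quadratic $\delta t^2+(4-\varepsilon)t-4=0$, the identity $\delta\theta^2+4(1-\theta^{-2})=\varepsilon$ that the paper asserts without comment in its final displayed equality.
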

\begin{proof}
Let $x, y\in\mathscr{E}$ and $x\perp^{\delta} y$. Then $\|\langle x, y\rangle\|\leq\delta\|x\|\,\|y\|$. As in the proof of Theorem \ref{th.6} (iii) we have
$$\big\|\langle Tx, Ty\rangle - {\gamma}^2\langle x, y\rangle\big\|\leq 4\Big(1 - \frac{2\delta}{\sqrt{(4 - \varepsilon)^2 + 16\delta} - (4 - \varepsilon)}\Big) \|Tx\|\,\|Ty\|.$$
Hence
\begin{align*}
&\|\langle Tx, Ty\rangle\|\\
&\leq \big\|\langle Tx, Ty\rangle - {\gamma}^2\langle x, y\rangle\big\| + {\gamma}^2\|\langle x, y\rangle\|
\\& \leq 4\Big(1 - \frac{2\delta}{\sqrt{(4 - \varepsilon)^2 + 16\delta} - (4 - \varepsilon)}\Big) \|Tx\|\,\|Ty\| + {\gamma}^2\delta\|x\|\,\|y\|
\\& \leq 4\Big(1 - \frac{2\delta}{\sqrt{(4 - \varepsilon)^2 + 16\delta} - (4 - \varepsilon)}\Big) \|Tx\|\,\|Ty\|
\\& \quad + {\gamma}^2\delta \frac{\sqrt{\sqrt{(4 - \varepsilon)^2 + 16\delta} - (4 - \varepsilon)}}{\gamma\sqrt{2\delta}}\|Tx\|\\
& \quad . \frac{\sqrt{\sqrt{(4 - \varepsilon)^2 + 16\delta} - (4 - \varepsilon)}}{\gamma\sqrt{2\delta}}\|Ty\|
\\& \leq \left[4\Big(1 - \frac{2\delta}{\sqrt{(4 - \varepsilon)^2 + 16\delta} - (4 - \varepsilon)}\Big) + \frac{\sqrt{(4 - \varepsilon)^2 + 16\delta} - (4 - \varepsilon)}{2}\right]\\
&\qquad \cdot \|Tx\|\,\|Ty\| = \varepsilon \|Tx\|\,\|Ty\|.
\end{align*}
Thus $Tx\perp^{\varepsilon} Ty$.
\end{proof}
Let us quote a result from \cite{P.W}.
\begin{lemma}\label{le.9}\cite[Theorem 3.4]{P.W}
Let $\delta, \varepsilon\in[0, 1)$. Let $\mathscr{H}, \mathscr{K}$ be Hilbert spaces and let $T: \mathscr{H} \longrightarrow \mathscr{K}$ be a nonzero $(\delta, \varepsilon)$-orthogonality preserving mapping. Then
$T$ satisfies
$\theta\|T\|\,\|\xi\|\leq \|T\xi\|$
for all $\xi\in \mathscr{H}$, with $\theta = \sqrt{\frac{(1 - \delta)(1 + \varepsilon)}{(1 + \delta)(1 - \varepsilon)}}$.
\end{lemma}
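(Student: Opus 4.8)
The statement is quoted from \cite[Theorem 3.4]{P.W}, so one option is to invoke that reference directly; here, though, is how I would reconstruct it from the machinery already in place. By homogeneity of $T$ it suffices to treat unit vectors, and since $\|T\xi\|\le\|T\|\,\|\xi\|$ holds automatically, the real content is a lower bound for the minimum modulus $[T]=\inf\{\|T\xi\|:\|\xi\|=1\}$ in terms of $\|T\|$. The engine for comparing the size of $T$ in different directions is Corollary \ref{cr.3}, which pins $\|T\eta\|$ against $\|T\zeta\|$ whenever $\eta\perp\zeta$. So the plan is to exhibit a pair of orthogonal directions along which $T$ simultaneously (nearly) attains its norm and its minimum modulus, and then feed that pair into Corollary \ref{cr.3}.

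The cleanest way to produce such a pair is spectral. By Theorem \ref{th.4}, $T$ is bounded, injective and satisfies $0<[T]\le\|T\|<\infty$, so $A:=T^{*}T$ is a positive bounded operator with $\operatorname{spec}(A)\subseteq[[T]^{2},\|T\|^{2}]$, where $[T]^{2}=\inf\operatorname{spec}(A)$ and $\|T\|^{2}=\sup\operatorname{spec}(A)$; if these coincide the inequality is trivial (and if $\dim\mathscr{H}=1$ there is nothing to prove), so I would assume $[T]<\|T\|$. For small $\rho>0$ the spectral projections of $A$ onto $[[T]^{2},[T]^{2}+\rho]$ and onto $[\|T\|^{2}-\rho,\|T\|^{2}]$ are both nonzero (each interval meets the spectrum) with mutually orthogonal ranges, so I would pick unit vectors $\xi_{\rho},\zeta_{\rho}$ in these two ranges; then $\xi_{\rho}\perp\zeta_{\rho}$, $\|T\xi_{\rho}\|^{2}\le[T]^{2}+\rho$ and $\|T\zeta_{\rho}\|^{2}\ge\|T\|^{2}-\rho$. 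Applying Corollary \ref{cr.3} to the orthogonal pair $\xi_{\rho},\zeta_{\rho}$ gives $\sqrt{\tfrac{(1-\delta)(1-\varepsilon)}{(1+\delta)(1+\varepsilon)}}\,\|T\zeta_{\rho}\|\le\|T\xi_{\rho}\|$, and letting $\rho\to0$ yields $\sqrt{\tfrac{(1-\delta)(1-\varepsilon)}{(1+\delta)(1+\varepsilon)}}\,\|T\|\le[T]$, which is exactly the asserted lower bound $\theta\|T\|\,\|\xi\|\le\|T\xi\|$ for every $\xi$, the optimal constant being the lower constant furnished by Corollary \ref{cr.3}.

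An alternative route in the style of Theorem \ref{th.4} avoids the spectral calculus: fix a unit $\xi$ and a near-maximizer $\zeta$, split $\xi=\xi_{1}+\xi_{2}$ with $\xi_{1}\parallel\zeta$ and $\xi_{2}\perp\xi_{1}$, so $T\xi=T\xi_{1}+T\xi_{2}$, and bound $\|T\xi\|$ from below via Corollary \ref{cr.3} on the orthogonal pair $\xi_{1},\xi_{2}$ together with the $\varepsilon$-orthogonality $T\xi_{1}\perp^{\varepsilon}T\xi_{2}$ to control the cross term $2\,\mathrm{Re}\,(T\xi_{1},T\xi_{2})$. I expect this cross term to be the main obstacle: estimating it crudely by $2\varepsilon\|T\xi_{1}\|\,\|T\xi_{2}\|$ is precisely what forces the inferior constant in Theorem \ref{th.4} (the extra summand $2\varepsilon\sqrt{\,\cdots\,}$ under the root), so reaching the sharp $\theta$ requires the two near-extremal directions to be \emph{genuinely} orthogonal rather than merely obtained by projection. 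This is why I favour the spectral route, in which orthogonality of the near-extremal directions is automatic, and which is in essence the argument underlying \cite[Theorem 3.4]{P.W}.
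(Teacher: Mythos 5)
The paper itself gives no proof of this lemma --- it is quoted verbatim from \cite[Theorem 3.4]{P.W}, so your fallback option of citing the reference is exactly what the authors do. Your spectral reconstruction, however, contains a genuine error at the last step: a constant misidentification. The setup is sound (by Theorem \ref{th.4}, $T$ is bounded and $0<[T]\le\|T\|<\infty$; $A=T^{*}T$ is positive with $[T]^{2}=\inf\operatorname{spec}(A)$, $\|T\|^{2}=\sup\operatorname{spec}(A)$; the two spectral subspaces are nonzero and mutually orthogonal for small $\rho$), and feeding the pair $(\xi_{\rho},\zeta_{\rho})$ into the \emph{lower} estimate of Corollary \ref{cr.3} does give $\sqrt{\tfrac{(1-\delta)(1-\varepsilon)}{(1+\delta)(1+\varepsilon)}}\,\|T\|\le[T]$. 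But this is \emph{not} ``exactly the asserted lower bound'': the lemma's constant is $\theta=\sqrt{\tfrac{(1-\delta)(1+\varepsilon)}{(1+\delta)(1-\varepsilon)}}$, whose $\varepsilon$-factors sit the other way up, and your constant is strictly smaller whenever $\varepsilon>0$. So as written you have proved a strictly weaker inequality and then declared it equal to the stated one.

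Two further points. First, the constant as printed in the lemma cannot be right anyway: whenever $\varepsilon>\delta$ one has $\theta>1$, and then $\theta\|T\|\,\|\xi\|\le\|T\xi\|\le\|T\|\,\|\xi\|$ forces $\|T\|=0$, while nonzero $(\delta,\varepsilon)$-orthogonality preserving maps certainly exist (the identity is one). Comparing with the chain $\theta\|T\|\,\|\xi\|\le\|T\xi\|\le\|T\|\,\|\xi\|\le\frac{1}{\theta}\|T\|\,\|\xi\|$ used in the proof of Theorem \ref{th.10} (which needs $\theta\le1$) and with W\'{o}jcik's actual result, the intended constant is the reciprocal, $\sqrt{\tfrac{(1+\delta)(1-\varepsilon)}{(1-\delta)(1+\varepsilon)}}$. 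Second, your argument can be repaired to reach exactly that constant: apply the \emph{upper} estimate of Corollary \ref{cr.3} with the near-maximal vector $\zeta_{\rho}$ in the role of $\eta$ and the near-minimal vector $\xi_{\rho}$ in the role of $\zeta$, obtaining $\|T\zeta_{\rho}\|\le\theta\|T\xi_{\rho}\|$, hence $\sqrt{\|T\|^{2}-\rho}\le\theta\sqrt{[T]^{2}+\rho}$ and, letting $\rho\to0$, $[T]\ge\theta^{-1}\|T\|$. Equivalently: the lower constant stated in Corollary \ref{cr.3} is not optimal, since symmetrizing its upper half over the orthogonal pair already improves it by precisely the factor $\tfrac{1+\delta}{1-\delta}$ your bound is missing. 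With that single change (and with the lemma's constant read as the reciprocal), your spectral route is correct and yields the bound that Theorem \ref{th.10} actually uses.
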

\begin{theorem}\label{th.10}
Let $\delta, \varepsilon\in[0, 1)$. Let $\mathscr{E}, \mathscr{F}$ be Hilbert $\mathscr{A}$-modules with $\mathbb{K}(\mathscr{H})\subseteq \mathscr{A} \subseteq \mathbb{B}(\mathscr{H})$ and let $T: \mathscr{E} \longrightarrow \mathscr{F}$ be a nonzero $\mathscr{A}$-linear $(\delta, \varepsilon)$-orthogonality preserving mapping. Then
\begin{itemize}
   \item[(i)] $\frac{(1 + \delta)(1 - \varepsilon)}{(1 - \delta)(1 + \varepsilon)}\|T\|^2\langle x, x\rangle\leq \langle Tx, Tx\rangle\leq \|T\|^2 \langle x, x\rangle$\\
                for all $x\in \mathscr{E}$.
   \item[(ii)] $\big\|\langle Tx, Ty\rangle - \|T\|^2\langle x, y\rangle\big\|\leq \frac{4(\varepsilon - \delta)}{(1 - \delta)(1 + \varepsilon)} \|Tx\|\,\|Ty\|$\\
                for all $x, y\in \mathscr{E}$.
\end{itemize}
\end{theorem}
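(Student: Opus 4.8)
The plan is to lift the module problem to a Hilbert space via a minimal projection, replace the crude constant $\theta$ of Theorem \ref{th.4} by W\'ojcik's optimal lower frame bound (Lemma \ref{le.9}), and then transport the resulting scalar estimates back to $\mathscr{A}$-valued operator inequalities exactly as in the passage $(\ref{id.16})\to(\ref{id.17})$. Writing $\rho=\frac{(1+\delta)(1-\varepsilon)}{(1-\delta)(1+\varepsilon)}$, the whole improvement over Theorem \ref{th.6} is that the sharp bound yields precisely $[T]\ge\sqrt{\rho}\,\|T\|$, which is what forces the clean coefficient $\rho$ in (i) and, via the one-sided nature of the resulting deviation, the coefficient $2(1-\rho)=\frac{4(\varepsilon-\delta)}{(1-\delta)(1+\varepsilon)}$ in (ii).

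For (i) I would fix a minimal projection $e=\eta\otimes\eta$ with $\eta$ a unit vector. By Lemma \ref{le.4}(ii) the restriction $T_e=T|_{\mathscr{E}_e}$ is a linear $(\delta,\varepsilon)$-orthogonality preserving map between the Hilbert spaces $\mathscr{E}_e$ and $\mathscr{F}_e$; by Theorem \ref{th.6}(i) it is bounded with positive minimum modulus, so Proposition \ref{pr.5}(ii) gives $\|T_e\|=\|T\|$. Applying Lemma \ref{le.9} to $T_e$ produces $\sqrt{\rho}\,\|T\|\,\|\xi\|\le\|T_e\xi\|$, while boundedness gives $\|T_e\xi\|\le\|T\|\,\|\xi\|$. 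Squaring, taking $\xi=xe$, and using $T(xe)=(Tx)e$ together with the identities $(xe,xe)=(\langle x,x\rangle\eta,\eta)$ and $(T(xe),T(xe))=(\langle Tx,Tx\rangle\eta,\eta)$, I obtain
\[
\rho\,\|T\|^2(\langle x,x\rangle\eta,\eta)\le(\langle Tx,Tx\rangle\eta,\eta)\le\|T\|^2(\langle x,x\rangle\eta,\eta).
\]
Since $\eta$ ranges over all unit vectors and both sides are homogeneous in $\eta$, the argument of $(\ref{id.16})\to(\ref{id.17})$ promotes this to the two operator inequalities asserted in (i).

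For (ii) I would argue from (i) in the same manner that Theorem \ref{th.6}(iii) was obtained from Theorem \ref{th.6}(ii), specialized to $\gamma=\|T\|$. The decisive feature is that the upper bound in (i) now has constant exactly $1$, so the $\mathscr{A}$-valued sesquilinear form $\Psi(x,y)=\|T\|^2\langle x,y\rangle-\langle Tx,Ty\rangle$ is \emph{positive}, with $0\le\Psi(z,z)\le(1-\rho)\|T\|^2\langle z,z\rangle$ and, after invoking the lower bound of (i), also $\Psi(z,z)\le(\rho^{-1}-1)\langle Tz,Tz\rangle$. Estimating $\Psi$ by polarization as in $(\ref{id.18})$--$(\ref{id.21})$ and normalizing $x$ and $y$, together with the interchange between the normalizations $\|T\|^2\|x\|\,\|y\|$ and $\|Tx\|\,\|Ty\|$ furnished by the frame bounds of (i), should yield
\[
\big\|\langle Tx,Ty\rangle-\|T\|^2\langle x,y\rangle\big\|\le 2(1-\rho)\,\|Tx\|\,\|Ty\|=\frac{4(\varepsilon-\delta)}{(1-\delta)(1+\varepsilon)}\,\|Tx\|\,\|Ty\|.
\]

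The hardest part will be the constant in (ii). Running the generic polarization of Theorem \ref{th.6}(iii) with $\tfrac{1}{\theta^2}$ replaced by $\rho$ only delivers the coefficient $4(1-\rho)$, that is, twice the claimed bound; the extra factor of $2$ comes from the loss in the module-level step $\|x+i^ky\|^2\le(\|x\|+\|y\|)^2$. Recovering the sharp $2(1-\rho)$ requires genuinely exploiting that the deviation $\langle Tz,Tz\rangle-\|T\|^2\langle z,z\rangle$ is one-sided — equivalently the positivity of $\Psi$ — so that the four polarization terms carry a single sign, and then tracking the conversion to $\|Tx\|\,\|Ty\|$ through the frame bounds. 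Everything else is the routine trace and minimal-projection bookkeeping already established in Theorem \ref{th.6}.
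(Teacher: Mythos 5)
Your part (i) is correct and is precisely the paper's own proof: the paper invokes Lemma \ref{le.9} and then says only that ``the proof is similar to the proof of Theorem \ref{th.6}'', and the restriction to $\mathscr{E}_e$, the identification $\|T_e\|=\|T\|$ via Proposition \ref{pr.5}, and the promotion of the scalar inequalities to $\mathscr{A}$-valued ones as in $(\ref{id.16})\Rightarrow(\ref{id.17})$ are exactly what that sentence hides. Your reading of Lemma \ref{le.9} with the fraction inverted, i.e. $\|T\xi\|\geq\sqrt{\rho}\,\|T\|\,\|\xi\|$ with $\rho=\frac{(1+\delta)(1-\varepsilon)}{(1-\delta)(1+\varepsilon)}$, is also the right one: as printed, the lemma would force $\|T\xi\|=\|T\|\,\|\xi\|$ whenever $\varepsilon\geq\delta$.

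The genuine gap is in part (ii), at exactly the step you flagged, and it cannot be closed. Your positivity idea is sound but proves the bound against the wrong normalizer: polarizing $\Psi(x,y)=\|T\|^2\langle x,y\rangle-\langle Tx,Ty\rangle$ and using $\|a-b\|\leq\max\{\|a\|,\|b\|\}$ for positive $a,b$ gives $\|\Psi(x,y)\|\leq 2(1-\rho)\,\|T\|^2\|x\|\,\|y\|$. Since $\|Tx\|\,\|Ty\|\leq\|T\|^2\|x\|\,\|y\|$, this does \emph{not} imply the claimed inequality, and the only conversion the frame bounds of (i) allow, namely $\|T\|\,\|x\|\leq\rho^{-1/2}\|Tx\|$, inflates the constant to $2(1-\rho)/\rho$. (Cauchy--Schwarz for the positive $\mathscr{A}$-sesquilinear form $\Psi$ --- legitimate here because $T$ is $\mathscr{A}$-linear --- does slightly better, $\frac{1-\rho}{\rho}\|Tx\|\,\|Ty\|$, but the factor $\rho^{-1}$ persists.)

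That factor is unavoidable, because statement (ii) is false whenever $\rho<\tfrac12$, and part (i) is the only input used by you and by the paper. Take $\mathscr{H}=\mathbb{C}$, so that $\mathscr{A}=\mathbb{C}$ and Hilbert $\mathscr{A}$-modules are Hilbert spaces; let $\mathscr{E}=\mathscr{F}=\mathbb{C}^2$, $\delta=0$, $\varepsilon=\tfrac12$, and $T=\mathrm{diag}(1,1/\sqrt{3})$. If $x,y$ are orthonormal and $U$ is the unitary with columns $x,y$, then $U^*T^*TU$ has diagonal entries $\|Tx\|^2,\|Ty\|^2$, off-diagonal entry $(Tx,Ty)$, trace $\tfrac43$ and determinant $\tfrac13$; hence $\frac{|(Tx,Ty)|^2}{\|Tx\|^2\|Ty\|^2}=1-\frac{1/3}{\|Tx\|^2\|Ty\|^2}\leq 1-\frac{1/3}{4/9}=\tfrac14$, so $T$ is a nonzero linear $(0,\tfrac12)$-orthogonality preserving map, with $\rho=\tfrac13$. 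Yet at $x=y=(0,1)$ the left-hand side of (ii) is $|1/3-1|=\tfrac23$, while the right-hand side is $\frac{4(1/2)}{(1)(3/2)}\cdot\tfrac13=\tfrac49<\tfrac23$. So the constant $\frac{4(\varepsilon-\delta)}{(1-\delta)(1+\varepsilon)}=2(1-\rho)$ is simply wrong relative to $\|Tx\|\,\|Ty\|$: the sharp constant derivable from (i), attained in this example, is $\frac{1-\rho}{\rho}=\frac{2(\varepsilon-\delta)}{(1+\delta)(1-\varepsilon)}$, whereas $2(1-\rho)$ (indeed even $1-\rho$) is correct only with $\|T\|^2\|x\|\,\|y\|$ on the right. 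In short, your suspicion about the factor of two detected a real flaw, but the flaw is in the theorem and in the paper's omitted proof, not in your bookkeeping; no completion of your argument --- or of the paper's --- can establish (ii) as stated.
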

\begin{proof}
By Lemma \ref{le.9}, we have
$\theta\|T\|\,\|\xi\|\leq \|T\xi\|\leq \|T\|\,\|\xi\|\leq\frac{1}{\theta}\|T\|\,\|\xi\|$
for all $\xi\in \mathscr{H}$, with $\theta = \sqrt{\frac{(1 - \delta)(1 + \varepsilon)}{(1 + \delta)(1 - \varepsilon)}}$.
Thus the proof is similar to
the proof of Theorem \ref{th.6} and so we omit it.
\end{proof}
Now, we are going to show some applications of the above theorems, which generalize some results from \cite{J.C.4, I.T, P.W, Z.M.F, Z.C.H.K}.

As a consequence of Theorem \ref{th.6} and Theorem \ref{th.10}, we have the following result.
\begin{corollary}\label{cr.11}
Let $0\leq \varepsilon<\delta< 1$. Let $\mathscr{E}, \mathscr{F}$ be Hilbert $\mathscr{A}$-modules with $\mathbb{K}(\mathscr{H})\subseteq \mathscr{A} \subseteq \mathbb{B}(\mathscr{H})$ and $T: \mathscr{E} \longrightarrow \mathscr{F}$ be an $\mathscr{A}$-linear $(\delta, \varepsilon)$-orthogonality preserving mapping. Then $T = 0$.
\end{corollary}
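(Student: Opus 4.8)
The plan is to argue by contradiction, extracting everything from the sharp estimate in Theorem~\ref{th.10}. Suppose that $T\neq0$. Since $\mathscr{E}$ and $\mathscr{F}$ are Hilbert $\mathscr{A}$-modules and $T$ is a nonzero $\mathscr{A}$-linear $(\delta,\varepsilon)$-orthogonality preserving mapping, the hypotheses of Theorem~\ref{th.6} (which underlies Theorem~\ref{th.10}) are met, and Theorem~\ref{th.10}(ii) applies to give
$$\big\|\langle Tx, Ty\rangle - \|T\|^2\langle x, y\rangle\big\|\leq \frac{4(\varepsilon - \delta)}{(1 - \delta)(1 + \varepsilon)} \|Tx\|\,\|Ty\|$$
for all $x, y\in\mathscr{E}$.

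The crux is simply to read off the sign of the coefficient on the right-hand side. In the regime $0\leq\varepsilon<\delta<1$ the numerator $4(\varepsilon-\delta)$ is strictly negative, whereas the denominator $(1-\delta)(1+\varepsilon)$ is strictly positive (since $\delta<1$ forces $1-\delta>0$ and $\varepsilon\geq0$ forces $1+\varepsilon>0$); hence the constant $\frac{4(\varepsilon-\delta)}{(1-\delta)(1+\varepsilon)}$ is strictly negative. The left-hand side, being a norm, is nonnegative, so the displayed inequality can hold only if its right-hand side is nonnegative as well. As the coefficient is negative, this forces $\|Tx\|\,\|Ty\|=0$ for every pair $x,y\in\mathscr{E}$.

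Specializing to $y=x$ then yields $\|Tx\|^2=0$, so $Tx=0$ for all $x\in\mathscr{E}$; that is, $T=0$. This contradicts the standing assumption $T\neq0$, and the contradiction establishes $T=0$.

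I expect no real technical obstacle here: the entire content of the corollary is the observation that the constant in Theorem~\ref{th.10}(ii) flips sign precisely when $\varepsilon<\delta$, combined with the nonnegativity of a norm. The only point worth confirming is that the regime $0\leq\varepsilon<\delta<1$ indeed makes the numerator negative and the denominator positive, which is immediate; everything else is packaged into the earlier theorems.
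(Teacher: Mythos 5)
Your proof is correct, and it follows the same overall strategy as the paper — assume $T\neq 0$, invoke Theorem~\ref{th.10}, and observe that its conclusion becomes untenable when $\varepsilon<\delta$ — but it uses a different part of that theorem. The paper argues from part (i): since $\theta=\sqrt{\tfrac{(1-\delta)(1+\varepsilon)}{(1+\delta)(1-\varepsilon)}}<1$ when $\varepsilon<\delta$, the two-sided bound $\tfrac{1}{\theta^2}\|T\|^2\langle x,x\rangle\leq\langle Tx,Tx\rangle\leq\|T\|^2\langle x,x\rangle$ forces the strict inequality $\|T\|^2\langle x,x\rangle<\tfrac{1}{\theta^2}\|T\|^2\langle x,x\rangle\leq\|T\|^2\langle x,x\rangle$, a contradiction once Theorem~\ref{th.6}(i) guarantees $0<[T]\leq\|T\|$. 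Your route via part (ii) — the coefficient $\tfrac{4(\varepsilon-\delta)}{(1-\delta)(1+\varepsilon)}$ is strictly negative while the left-hand side is a norm, so $\|Tx\|\,\|Ty\|=0$ for all $x,y$, whence $T=0$ — is the mirror image of the same sign observation. If anything yours is marginally cleaner: it needs only nonnegativity of norms, whereas the paper's version relies on a strict inequality between positive elements of $\mathscr{A}$ (which bites only because $\|T\|>0$ and $\langle x,x\rangle\neq 0$ for some $x$), and yours does not need to cite Theorem~\ref{th.6}(i) separately. Both proofs consume exactly the same input, namely that the constants of Theorem~\ref{th.10} degenerate precisely in the regime $\varepsilon<\delta$.
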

\begin{proof}
We suppose, for a contradiction, that there is a nonzero $\mathscr{A}$-linear $(\delta, \varepsilon)$-orthogonality preserving mapping with $0\leq \varepsilon<\delta< 1$. According to Theorem \ref{th.6} (i),
$0<[T]\leq\|T\|<\infty$
and also by Theorem \ref{th.10}, we have
$\frac{1}{\theta^2}\|T\|^2\langle x, x\rangle\leq \langle Tx, Tx\rangle\leq \|T\|^2 \langle x, x\rangle$
for all $x\in \mathscr{E}$, with $\theta = \sqrt{\frac{(1 - \delta)(1 + \varepsilon)}{(1 + \delta)(1 - \varepsilon)}}$.
Since $\theta <1$, we obtain
$$0<\|T\|^2\langle x, x\rangle< \frac{1}{\theta^2}\|T\|^2\langle x, x\rangle\leq \langle Tx, Tx\rangle\leq \|T\|^2 \langle x, x\rangle$$
for all $x\in \mathscr{E}$, a contradiction. Therefore, $T = 0$.
\end{proof}
\begin{corollary}\label{cr.11.2}
Let $\delta, \varepsilon\in[0, 1)$. Let $\mathscr{E}, \mathscr{F}$ be Hilbert $\mathscr{A}$-modules with $\mathbb{K}(\mathscr{H})\subseteq \mathscr{A} \subseteq \mathbb{B}(\mathscr{H})$ and let for any $n\in \mathbb{N}$, $T_n: \mathscr{E} \longrightarrow \mathscr{F}$ be an $\mathscr{A}$-linear $(\delta, \varepsilon)$-orthogonality preserving mapping. If $T: \mathscr{E} \longrightarrow \mathscr{F}$ is a bounded linear mapping such that $T_n\rightarrow T$, then $T$ is $\varphi$-orthogonality preserving with $\varphi = \frac{4(\varepsilon - \delta)}{(1 - \delta)(1 + \varepsilon)}$.
\end{corollary}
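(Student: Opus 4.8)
The plan is to apply Theorem~\ref{th.10}(ii) to each member of the sequence and then pass to the limit; here ``$\varphi$-orthogonality preserving'' for a bounded linear map is read, in accordance with Theorem~\ref{th.10}(ii), as the approximate orthogonality equation $\big\|\langle Tx, Ty\rangle - \|T\|^2\langle x, y\rangle\big\|\leq \varphi\,\|Tx\|\,\|Ty\|$ for all $x, y\in\mathscr{E}$ (which specializes, for $x\perp y$, to $Tx\perp^\varphi Ty$). First I would observe that every $T_n$ satisfies this inequality with $\varphi = \frac{4(\varepsilon - \delta)}{(1 - \delta)(1 + \varepsilon)}$: if $T_n\neq 0$ this is precisely the conclusion of Theorem~\ref{th.10}(ii) applied to the $\mathscr{A}$-linear $(\delta, \varepsilon)$-orthogonality preserving mapping $T_n$, while if $T_n = 0$ the inequality degenerates to $0\leq 0$ and still holds. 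Consequently, for every $n\in\mathbb{N}$ and all $x, y\in\mathscr{E}$,
\begin{align}\label{id.conv}
\big\|\langle T_n x, T_n y\rangle - \|T_n\|^2\langle x, y\rangle\big\|\leq \varphi\,\|T_n x\|\,\|T_n y\|.
\end{align}

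Next I would fix $x, y\in\mathscr{E}$ and let $n\to\infty$ in $(\ref{id.conv})$, using that $T_n\to T$ in operator norm. From $\|T_n x - Tx\|\leq \|T_n - T\|\,\|x\|$ we get $T_n x\to Tx$ and likewise $T_n y\to Ty$, whence $\|T_n x\|\to\|Tx\|$ and $\|T_n y\|\to\|Ty\|$; moreover $\big|\,\|T_n\| - \|T\|\,\big|\leq \|T_n - T\|\to 0$ gives $\|T_n\|^2\to\|T\|^2$. The only ingredient genuinely using the module structure is the convergence of the $\mathscr{A}$-valued inner products, which follows from the Cauchy--Schwarz inequality in Hilbert $C^*$-modules: since
$$\langle T_n x, T_n y\rangle - \langle Tx, Ty\rangle = \langle T_n x - Tx, T_n y\rangle + \langle Tx, T_n y - Ty\rangle,$$
its norm is at most $\|T_n x - Tx\|\,\|T_n y\| + \|Tx\|\,\|T_n y - Ty\|\to 0$, so $\langle T_n x, T_n y\rangle\to\langle Tx, Ty\rangle$ in $\mathscr{A}$.

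Passing to the limit on both sides of $(\ref{id.conv})$ and invoking continuity of the norm on $\mathscr{A}$, I would obtain $\big\|\langle Tx, Ty\rangle - \|T\|^2\langle x, y\rangle\big\|\leq \varphi\,\|Tx\|\,\|Ty\|$ for all $x, y\in\mathscr{E}$. In particular, whenever $x\perp y$ one has $\langle x, y\rangle = 0$, so this reduces to $\|\langle Tx, Ty\rangle\|\leq \varphi\,\|Tx\|\,\|Ty\|$, that is $Tx\perp^\varphi Ty$, which is exactly the assertion that $T$ is $\varphi$-orthogonality preserving. Since the result is in essence a stability/closedness statement, I expect no serious obstacle beyond bookkeeping; the one spot deserving care is the joint continuity of the $\mathscr{A}$-valued inner product---supplied by the Cauchy--Schwarz estimate above---together with the passage $\|T_n\|\to\|T\|$, which is why convergence of $T_n$ to $T$ must be understood in operator norm rather than merely pointwise.
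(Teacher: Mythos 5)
Your proof is correct and follows essentially the same route as the paper: apply Theorem \ref{th.10}(ii) to each $T_n$ and pass to the limit using the Cauchy--Schwarz inequality for the $\mathscr{A}$-valued inner product together with operator-norm convergence. The only (harmless) differences are that you prove the full stability inequality $\big\|\langle Tx,Ty\rangle - \|T\|^2\langle x,y\rangle\big\|\leq\varphi\,\|Tx\|\,\|Ty\|$ for all $x,y\in\mathscr{E}$ (which additionally requires $\|T_n\|\to\|T\|$) and then specialize to $x\perp y$, whereas the paper fixes $x\perp y$ from the outset so that the term $\|T_n\|^2\langle x,y\rangle$ never appears; you also handle the degenerate case $T_n=0$ explicitly, which the paper silently skips.
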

\begin{proof}
Let $x, y\in\mathscr{E}$ and $x\perp y$. Hence for any $n\in \mathbb{N}$, by Theorem \ref{th.10} (ii), we have
$\|\langle T_nx, T_ny\rangle\|\leq \varphi \|T_nx\|\,\|T_ny\|$,
for all $x, y\in \mathscr{E}$, with $\varphi = \frac{4(\varepsilon - \delta)}{(1 - \delta)(1 + \varepsilon)}$. Thus
\begin{align*}
\|\langle Tx, Ty\rangle\|&\leq \|\langle Tx, Ty\rangle - \langle T_nx, Ty\rangle\| + \|\langle T_nx, Ty\rangle - \langle T_nx, T_ny\rangle\|\\
&\quad + \|\langle T_nx, T_ny\rangle\|
\\& \leq \|T_n - T\|\,\|x\|\,\|Ty\| + \|T_nx\|\,\|T - T_n\|\,\|y\|\\
&\quad  + \frac{4(\varepsilon - \delta)}{(1 - \delta)(1 + \varepsilon)}\|T_nx\|\,\|T_ny\|.
\end{align*}
Letting $n\rightarrow\infty$, we obtain
$\|\langle Tx, Ty\rangle\|\leq\frac{4(\varepsilon - \delta)}{(1 - \delta)(1 + \varepsilon)}\|Tx\|\,\|Ty\|$,
which is nothing else but $Tx \perp^{\varphi} Ty$.
\end{proof}
Taking $\mathscr{E} = \mathscr{F}$ and $T = id$, one obtains, from Theorem \ref{th.10} the following result.
\begin{corollary}\label{cr.11.21}
Let $\delta, \varepsilon, \vartheta\in[0, 1)$. Let $\mathscr{E}$ be a Hilbert $\mathscr{A}$-module with $\mathbb{K}(\mathscr{H})\subseteq \mathscr{A} \subseteq \mathbb{B}(\mathscr{H})$ and let ${\langle , \rangle}_1$ and ${\langle , \rangle}_2$ be two $\mathscr{A}$-valued inner products on $\mathscr{E}$.
If ${\perp_1}^{\delta} \,\subseteq  {\perp_2}^{\varepsilon}$, i.e., if $\|{\langle x, y\rangle}_1\|\leq\delta\|x\|_1\,\|y\|_1 \Rightarrow \|{\langle x, y\rangle}_2\|\leq \varepsilon\|x\|_2\,\|y\|_2$ for all $x, y\in \mathscr{E}$, then there exists $\gamma > 0$ such that
\begin{itemize}
   \item[(i)] $\frac{\gamma}{\theta^2}{\langle x, x\rangle}_1\leq {\langle x, x\rangle}_2\leq \gamma {\langle x, x\rangle}_1$\\
                for all $x\in \mathscr{E}$, with $\theta = \sqrt{\frac{(1 - \delta)(1 + \varepsilon)}{(1 + \delta)(1 - \varepsilon)}}$.
   \item[(ii)] $\|{\langle x, y\rangle}_2 - \gamma{\langle x, y\rangle}_1\|\leq \varphi \min\{\gamma\|x\|_1\,\|y\|_1, \|x\|_2\,\|y\|_2\}$\\
                for all $x, y\in \mathscr{E}$, with $\varphi = \frac{4(\varepsilon - \delta)}{(1 - \delta)(1 + \varepsilon)}$.
   \item[(iii)] ${\perp_2}^{\vartheta}\, \subseteq  {\perp_1}^{\nu}$, with $\nu = \vartheta + \frac{4(\varepsilon - \delta)}{(1 - \delta)(1 + \varepsilon)}$,\\
   which makes sense if $\nu < 1$, i.e., for sufficiently small $\delta, \varepsilon$ and $\vartheta$.
\end{itemize}
\end{corollary}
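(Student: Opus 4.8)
The plan is to recognize the hypothesis as the approximate orthogonality preserving property of a single map and then invoke Theorem \ref{th.10}. I regard the set $\mathscr{E}$ as carrying two Hilbert $\mathscr{A}$-module structures, $\mathscr{E}_1 = (\mathscr{E}, {\langle\cdot,\cdot\rangle}_1)$ and $\mathscr{E}_2 = (\mathscr{E}, {\langle\cdot,\cdot\rangle}_2)$, and take $T = \mathrm{id}\colon \mathscr{E}_1 \longrightarrow \mathscr{E}_2$. Then $T$ is a nonzero $\mathscr{A}$-linear mapping, and the inclusion ${\perp_1}^{\delta} \subseteq {\perp_2}^{\varepsilon}$ says exactly that $\|{\langle x, y\rangle}_1\| \leq \delta\|x\|_1\,\|y\|_1$ implies $\|{\langle Tx, Ty\rangle}_2\| \leq \varepsilon\|Tx\|_2\,\|Ty\|_2$; that is, $T$ is $(\delta, \varepsilon)$-orthogonality preserving. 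By Theorem \ref{th.6}(i) we have $0 < [T] \leq \|T\| < \infty$, so I set $\gamma := \|T\|^2 > 0$, which will be the $\gamma$ claimed in the statement. The only point needing care is that both structures are genuinely Hilbert $\mathscr{A}$-modules so that Theorem \ref{th.10} applies; this is part of the standing hypothesis on $\mathscr{E}$, and a posteriori the two norms are equivalent by (i).

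For (i) I feed $T$ into Theorem \ref{th.10}(i). Since $Tx = x$, the inner product $\langle Tx, Tx\rangle$ is computed in $\mathscr{E}_2$ and equals ${\langle x, x\rangle}_2$, while $\langle x, x\rangle$ is ${\langle x, x\rangle}_1$. Recording that $\tfrac{1}{\theta^2} = \tfrac{(1+\delta)(1-\varepsilon)}{(1-\delta)(1+\varepsilon)}$ for $\theta = \sqrt{\tfrac{(1-\delta)(1+\varepsilon)}{(1+\delta)(1-\varepsilon)}}$, the conclusion of Theorem \ref{th.10}(i) becomes $\tfrac{\gamma}{\theta^2}{\langle x, x\rangle}_1 \leq {\langle x, x\rangle}_2 \leq \gamma{\langle x, x\rangle}_1$, which is (i). Taking $C^*$-norms in the upper inequality, and using that the norm is monotone on positive elements, gives $\|x\|_2^2 \leq \gamma\|x\|_1^2$, hence $\|x\|_2\,\|y\|_2 \leq \gamma\|x\|_1\,\|y\|_1$ for all $x, y$; this comparison drives everything below.

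For (ii), Theorem \ref{th.10}(ii) applied to $T$ yields $\|{\langle x, y\rangle}_2 - \gamma{\langle x, y\rangle}_1\| \leq \varphi\|x\|_2\,\|y\|_2$ with $\varphi = \tfrac{4(\varepsilon-\delta)}{(1-\delta)(1+\varepsilon)}$. By the norm comparison from (i) the minimum $\min\{\gamma\|x\|_1\|y\|_1, \|x\|_2\|y\|_2\}$ is always attained at $\|x\|_2\,\|y\|_2$, so this estimate is precisely (ii). For (iii), suppose $x\,{\perp_2}^{\vartheta}\,y$, i.e. $\|{\langle x, y\rangle}_2\| \leq \vartheta\|x\|_2\,\|y\|_2$. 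Multiplying the target by $\gamma$ and using the triangle inequality, $\gamma\|{\langle x, y\rangle}_1\| \leq \|\gamma{\langle x, y\rangle}_1 - {\langle x, y\rangle}_2\| + \|{\langle x, y\rangle}_2\|$. I bound the first summand by (ii) (since the minimum is $\leq \gamma\|x\|_1\|y\|_1$) and the second by $\vartheta\|x\|_2\,\|y\|_2 \leq \vartheta\gamma\|x\|_1\,\|y\|_1$ via (i); adding and dividing by $\gamma > 0$ gives $\|{\langle x, y\rangle}_1\| \leq (\varphi + \vartheta)\|x\|_1\,\|y\|_1$, i.e. $x\,{\perp_1}^{\nu}\,y$ with $\nu = \vartheta + \varphi$.

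The argument is essentially bookkeeping once the identity-map reformulation is in place; the things to get right are the identification of the two inner products and norms under $T = \mathrm{id}$, the value of $\tfrac{1}{\theta^2}$, and the monotonicity of the $C^*$-norm on positive elements, which both collapses the minimum in (ii) and powers the estimate in (iii). I expect the only genuinely delicate point to be the completeness remark in the first paragraph, namely ensuring that Theorem \ref{th.10} is legitimately applicable to \emph{both} module structures rather than just one.
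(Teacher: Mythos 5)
Your proposal is correct and follows exactly the paper's route: the paper proves this corollary precisely by taking $\mathscr{E} = \mathscr{F}$ with the two inner-product structures and $T = \mathrm{id}$, then invoking Theorem~\ref{th.10}. Your write-up merely fills in the bookkeeping the paper leaves implicit (the norm comparison collapsing the minimum in (ii), and the triangle-inequality derivation of (iii)), and it does so correctly.
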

Next we obtain some characterizations of the orthogonality preserving mappings in Hilbert $\mathscr{A}$-modules.
\begin{corollary}\label{cr.12}
Let $\varepsilon\in[0, 1)$. Let $\mathscr{E}, \mathscr{F}$ be Hilbert $\mathscr{A}$-modules with $\mathbb{K}(\mathscr{H})\subseteq \mathscr{A} \subseteq \mathbb{B}(\mathscr{H})$. For a nonzero $\mathscr{A}$-linear mapping $T:\mathscr{E}\longrightarrow \mathscr{F}$ the following statements are equivalent:
\begin{itemize}
   \item[(i)] There exists $\gamma>0$ such that $\|Tx\|=\gamma\|x\|$ for all $x\in \mathscr{E}$.
   \item[(ii)] $T$ is injective and $\frac{\langle Tx, Ty\rangle}{\|Tx\|\|Ty\|}=\frac{\langle x, y\rangle}{\|x\|\|y\|}$ for all
                   $x, y\in \mathscr{E}\setminus\{0\}$.
   \item[(iii)] $|x|=|y|\, \Rightarrow \,|Tx|=|Ty|$ for all $x, y\in \mathscr{E}$.
   \item[(iv)] $|x|\leq|y|\, \Rightarrow \,|Tx|\leq |Ty|$ for all $x, y\in \mathscr{E}$.
   \item[(v)] $T$ is strongly orthogonality preserving.
   \item[(vi)] $T$ is orthogonality preserving.
   \item[(vii)] $T$ is strongly $(\varepsilon, \varepsilon)$-orthogonality preserving.
   \item[(viii)] $T$ is $(\varepsilon, \varepsilon)$-orthogonality preserving.
\end{itemize}
\end{corollary}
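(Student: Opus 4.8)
The plan is to funnel all eight statements through a single \emph{rigid} identity: the existence of a constant $\gamma>0$ with $\langle Tx,Ty\rangle=\gamma^2\langle x,y\rangle$ for all $x,y\in\mathscr{E}$; I will call this relation $(*)$. The device that produces $(*)$ from the various orthogonality hypotheses is Theorem \ref{th.10}(ii), whose bound $\frac{4(\varepsilon-\delta)}{(1-\delta)(1+\varepsilon)}\|Tx\|\,\|Ty\|$ collapses to $0$ as soon as the two tolerances agree. Since an $\mathscr{A}$-linear orthogonality preserving map is exactly a $(0,0)$-orthogonality preserving map, applying Theorem \ref{th.10}(ii) with $\delta=\varepsilon=0$ shows (vi) yields $\langle Tx,Ty\rangle=\|T\|^2\langle x,y\rangle$; applying it with the common value $\varepsilon$ shows (viii) yields the same identity. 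Thus both (vi) and (viii) give $(*)$ with $\gamma=\|T\|$ at once, and, by definition, the strong variants (v) and (vii) imply their non-strong counterparts, so they too furnish $(*)$.

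In the reverse direction, $(*)$ (with the forced value $\gamma=\|T\|$) renders every item routine. Putting $y=x$ and taking $C^*$-norms gives $\|Tx\|^2=\gamma^2\|x\|^2$, which is (i) and in particular makes $T$ injective; dividing $(*)$ by $\|Tx\|\,\|Ty\|=\gamma^2\|x\|\,\|y\|$ produces the angle identity of (ii). The same substitution $y=x$ read at the level of the $\mathscr{A}$-valued norm gives $|Tx|^2=\gamma^2|x|^2$, hence $|Tx|=\gamma|x|$; because multiplication by $\gamma$ is an order isomorphism of the positive cone of $\mathscr{A}$, this at once yields (iii) and (iv), while the exact scaling of the inner product returns (v)--(viii). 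Consequently only the metric conditions (i)--(iv) still need to be shown to imply $(*)$, and of these (ii) is immediate, since the angle identity forces $\langle x,y\rangle=0\Rightarrow\langle Tx,Ty\rangle=0$, i.e. (vi).

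For (i), (iii) and (iv) I would descend to the Hilbert-space slices. Fix a minimal projection $e=\eta\otimes\eta$; by Lemma \ref{le.4} and Proposition \ref{pr.5} the restriction $T_e:=T|_{\mathscr{E}_e}$ is a linear map on the Hilbert space $\mathscr{E}_e$ with $\|T_e\|=\|T\|$. On $\mathscr{E}_e$ one has $|x|=(x,x)\,e$ and $\|x\|_{\mathscr{E}_e}^2=(x,x)$, so conditions (i), (iii), (iv) become, respectively, ``$\|T_ex\|=\gamma\|x\|$'', ``equal norms are preserved'', and ``the order of norms is preserved''. A homogeneity argument shows each of these forces $\|T_ex\|_{\mathscr{E}_e}=\gamma\|x\|_{\mathscr{E}_e}$ with $\gamma=\|T_e\|=\|T\|$, so $T_e/\gamma$ is an isometry and polarization in $\mathscr{E}_e$ gives $(T_ex,T_ey)=\gamma^2(x,y)$, that is $\langle T(xe),T(ye)\rangle=\gamma^2\langle xe,ye\rangle$. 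For arbitrary $x,y\in\mathscr{E}$ this reads $e\langle Tx,Ty\rangle e=\gamma^2\,e\langle x,y\rangle e$, i.e. $\big((\langle Tx,Ty\rangle-\gamma^2\langle x,y\rangle)\eta,\eta\big)=0$ for every unit vector $\eta$; since this says the numerical range of $\langle Tx,Ty\rangle-\gamma^2\langle x,y\rangle\in\mathscr{A}\subseteq\mathbb{B}(\mathscr{H})$ is $\{0\}$, that element vanishes and $(*)$ holds on all of $\mathscr{E}$.

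The step I expect to be the genuine obstacle is precisely this last lifting. Conditions (iii) and (iv) record only information about the $C^*$-valued norm $|\cdot|$, so one cannot extract the full operator identity $(*)$ by working in $\mathscr{E}$ directly; the passage to $\mathscr{E}_e$ together with the supply of minimal projections guaranteed by $\mathbb{K}(\mathscr{H})\subseteq\mathscr{A}$ is what makes ``norms preserved'' upgrade to ``inner products scaled''. The subtle point inside this step is confirming that the scaling constant $\gamma$ does not depend on the slice $e$, which is exactly where $\|T_e\|=\|T\|$ from Proposition \ref{pr.5} is needed. Everything else---the orthogonality-type equivalences---is effortless once Theorem \ref{th.10} is in hand.
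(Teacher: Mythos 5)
Your proposal is correct in substance, but it takes a genuinely different route from the paper's own proof, which is far shorter and deliberately not self-contained: the paper imports the equivalence of (i)--(vi) wholesale from Theorem 4.6 and Corollary 4.11 of \cite{Z.M.F}, declares (ii)$\Rightarrow$(vii) and (vii)$\Rightarrow$(viii) trivial, and then proves the only genuinely new implication, (viii)$\Rightarrow$(i), by the same collapse you use --- Theorem \ref{th.10} with $\delta=\varepsilon$, so that the two-sided bound degenerates to $\langle Tx,Tx\rangle=\|T\|^2\langle x,x\rangle$. You instead funnel all eight statements through the rigid identity $(*)$, recovering (vi)$\Rightarrow(*)$ and (viii)$\Rightarrow(*)$ from Theorem \ref{th.10}(ii) with $\delta=\varepsilon$, and re-proving the implications (i), (iii), (iv)$\Rightarrow(*)$ from scratch by the slicing machinery already present in the paper: Lemma \ref{le.4}, Proposition \ref{pr.5}, polarization on the Hilbert space $\mathscr{E}_e$, and the observation that $e\,a\,e=(a\eta,\eta)e$ lets one upgrade the slice identities to $\langle Tx,Ty\rangle=\gamma^2\langle x,y\rangle$ via vanishing numerical range. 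What your route buys is a self-contained proof that does not lean on \cite{Z.M.F} and exhibits one transparent mechanism behind all the equivalences; what the paper's route buys is brevity, since the hard equivalences (i)--(vi) were already established elsewhere for $\mathscr{A}$-linear maps.

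Two caveats. First, in the step (iii)/(iv)$\Rightarrow(*)$ you need every slice constant $\gamma_e$ to be strictly positive before Proposition \ref{pr.5} applies, since that proposition assumes $0<[T_e]$. This is easily repaired: since $T\neq 0$ there is a minimal projection $e_1$ with $T_{e_1}\neq 0$ (if $(Tx)e=0$ for all $x$ and all minimal $e$, then $\langle Tx,Tx\rangle=0$ for all $x$), so $[T]=[T_{e_1}]=\gamma_{e_1}>0$ by Proposition \ref{pr.5}; and for any other minimal $e$ one has $\gamma_e=[T_e]\geq[T]>0$ because the infimum defining $[T_e]$ runs over a subset of the unit sphere of $\mathscr{E}$. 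Hence all slice constants equal $\|T\|$, as you intended. Second, your treatment of (v) and (vii) --- that the strong variants imply the plain ones ``by definition'' and that $(*)$ implies them --- rests on the definitions of strong (approximate) orthogonality preservation, which this paper never states (they live in \cite{Z.M.F}). This matches the level of rigor of the paper itself, which calls (vii)$\Rightarrow$(viii) trivial and cites \cite{Z.M.F} for (v), so it is not a gap relative to the paper; but in a self-contained write-up you would have to recall those definitions and verify they are invariant under the exact scaling furnished by $(*)$.
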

\begin{proof}
It follows from Theorem (4.6) and Corollary (4.11) of \cite{Z.M.F} we have (i) $\Leftrightarrow$ (ii) $\Leftrightarrow$ (iii) $\Leftrightarrow$ (iv)  $\Leftrightarrow$ (v) $\Leftrightarrow$ (vi).

(ii) $\Rightarrow$ (vii) and (vii)$\Rightarrow$ (viii) are trivial.

To prove (viii)$\Rightarrow$ (i), let $\delta := \varepsilon$. From Theorem \ref{th.10} we obtain
$$\frac{(1 + \varepsilon)(1 - \varepsilon)}{(1 - \varepsilon)(1 + \varepsilon)}\|T\|^2\langle x, x\rangle\leq \langle Tx, Tx\rangle\leq \|T\|^2 \langle x, x\rangle$$
for all $x\in \mathscr{E}$. Thus $\langle Tx, Tx\rangle = \|T\|^2 \langle x, x\rangle$ for all $x\in \mathscr{E}$.
\end{proof}
The following example shows that conditions (iii)-(viii) in Corollary \ref {cr.12} are not equivalent to conditions (i)-(ii), even in the case $\varepsilon = 0$, in an arbitrary Hilbert $\mathscr{A}$-module.
\begin{example}\label{xe.12.1}
Following \cite[Example 4.7]{Z.M.F}, let $\Omega $ be a locally compact
Hausdorff space. Let us take $\mathscr{E} = \mathscr{F} = C_0(\Omega)$, the $C^*$-algebra of all continuous complex-valued functions vanishing at infinity on $\Omega$. For a nonzero function $f_0 \in C_0(\Omega)$, suppose that $T:C_0(\Omega)\longrightarrow C_0(\Omega)$ is given by $T(g) = f_0g$. Obviously
$T$ is $C_0(\Omega)$-linear and satisfies conditions (iii)-(viii) but need not satisfies conditions (i)-(ii). Indeed, if there exists $\gamma>0$ such that $\|T(g)\|=\gamma\|g\|$ for all $g\in C_0(\Omega)$, then $\frac{1}{\gamma^2}\overline{f_0}f_0g = g$ for all $g\in C_0(\Omega)$ and hence, $\frac{1}{\gamma^2}\overline{f_0}f_0$ is the identity in $C_0(\Omega)$, which is a contradiction.
\end{example}
Note that the assumption of $\mathscr{A}$-linearity, even in the case $\varepsilon = 0$ and $\mathscr{E} = \mathscr{F} = \mathscr{A} = \mathbb{B}(\mathscr{H})$, is necessary in Corollary \ref {cr.12} as one can see from the following example.
\begin{example}\label{xe.12.2}
Let $\mathscr{E} = \mathscr{F} = \mathbb{B}(\mathscr{H})$ and let $P\in\mathbb{B}(\mathscr{H})$ be a nontrivial projection. Then there exists $S_1\in\mathbb{B}(\mathscr{H})$ such that $S_1P \neq PS_1$. Hence there exists $S_2\in\mathbb{B}(\mathscr{H})$ such that $S_2(S_1P - PS_1)\neq 0$. Now, the mapping $T:\mathbb{B}(\mathscr{H})\longrightarrow \mathbb{B}(\mathscr{H})$ defined by $T(S) = SP$ is orthogonality preserving. Since $T(S_2S_1) - T(S_2)S_1 = S_2(S_1P - PS_1)\neq 0$, so $T$ is not $\mathbb{B}(\mathscr{H})$-linear. But $T$ does not satisfy (i). Indeed, if there exists $\gamma>0$ such that $\|T(S)\|=\gamma\|S\|$ for all $S\in \mathbb{B}(\mathscr{H})$, then for $S = P$
we get $\gamma = 1$. But $P$ is a nontrivial projection and we obtain a contradiction; see \cite[Example 3.2]{I.T}.
\end{example}
\begin{corollary}\label{cr.12.3}
Let $\delta, \varepsilon\in[0, 1)$ and let $\mathscr{E}, \mathscr{F}$ be Hilbert $\mathscr{A}$-modules. The following statements hold:
\begin{itemize}
   \item[(i)] If $S:\mathscr{E}\longrightarrow \mathscr{E}$ is a linear $(\delta, \delta)$-orthogonality preserving mapping and $T$ is $(\delta, \varepsilon)$-orthogonality preserving, then $TS$ is linear $(\delta, \varepsilon)$-orthogonality preserving mapping.
   \item[(ii)] If $S:\mathscr{F}\longrightarrow \mathscr{F}$ is a nonzero $\mathscr{A}$-linear $(\varepsilon, \varepsilon)$-orthogonality preserving mapping with $\mathbb{K}(\mathscr{H})\subseteq \mathscr{A} \subseteq \mathbb{B}(\mathscr{H})$ and $T$ is an $\mathscr{A}$-linear $(\delta, \varepsilon)$-orthogonality preserving mapping, then $ST$ is $\mathscr{A}$-linear $(\delta, \varepsilon)$-orthogonality preserving.
\end{itemize}
\end{corollary}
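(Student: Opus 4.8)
The plan is to prove both parts by directly chaining the defining implications of approximate orthogonality preservation, taking care that the output tolerance of the inner map matches the admissible input tolerance of the outer map.

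For part (i), I would fix $x, y \in \mathscr{E}$ with $x \perp^{\delta} y$, i.e. $\|\langle x, y\rangle\| \leq \delta\|x\|\,\|y\|$. Since $S: \mathscr{E} \longrightarrow \mathscr{E}$ is $(\delta, \delta)$-orthogonality preserving, this yields $Sx \perp^{\delta} Sy$ in $\mathscr{E}$. Because $T$ is $(\delta, \varepsilon)$-orthogonality preserving, applying it to the $\delta$-orthogonal pair $(Sx, Sy)$ gives $T(Sx) \perp^{\varepsilon} T(Sy)$, that is, $(TS)x \perp^{\varepsilon} (TS)y$. Thus $x \perp^{\delta} y \Rightarrow (TS)x \perp^{\varepsilon} (TS)y$, which is exactly the assertion that $TS$ is $(\delta, \varepsilon)$-orthogonality preserving; linearity of $TS$ is inherited from that of $S$ and $T$.

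For part (ii), the same chaining runs in the opposite order. I would fix $x, y \in \mathscr{E}$ with $x \perp^{\delta} y$. Since $T$ is $(\delta, \varepsilon)$-orthogonality preserving we obtain $Tx \perp^{\varepsilon} Ty$ in $\mathscr{F}$, and since $S: \mathscr{F} \longrightarrow \mathscr{F}$ is $(\varepsilon, \varepsilon)$-orthogonality preserving, applying $S$ to the $\varepsilon$-orthogonal pair $(Tx, Ty)$ gives $S(Tx) \perp^{\varepsilon} S(Ty)$, i.e. $(ST)x \perp^{\varepsilon} (ST)y$. Hence $ST$ is $(\delta, \varepsilon)$-orthogonality preserving, and it is $\mathscr{A}$-linear as a composite of two $\mathscr{A}$-linear maps.

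The argument is essentially bookkeeping, so there is no serious analytic obstacle; the only point requiring care is the matching of tolerances, namely that in (i) the map $S$ must carry $\delta$-orthogonality to $\delta$-orthogonality so that its image is admissible input for $T$, while in (ii) the map $T$ outputs $\varepsilon$-orthogonality, which is precisely the admissible input for the $(\varepsilon, \varepsilon)$-preserving map $S$. One should also keep track of the ambient module in which each orthogonality relation is read, as it moves $\mathscr{E} \to \mathscr{E} \to \mathscr{F}$ in (i) and $\mathscr{E} \to \mathscr{F} \to \mathscr{F}$ in (ii). Finally, the structural hypotheses $\mathbb{K}(\mathscr{H}) \subseteq \mathscr{A} \subseteq \mathbb{B}(\mathscr{H})$ and the nonvanishing of $S$ in (ii) serve only to guarantee that $S$ is of the intended $\mathscr{A}$-linear form; they are not otherwise invoked in the composition itself.
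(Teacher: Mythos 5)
Your proof is correct, and for part (i) it coincides with what the paper does (pure definition-chaining). For part (ii), however, your route is genuinely different from the paper's. The paper's proof invokes, in addition to the definition, an equivalence from Corollary \ref{cr.12}: in effect it first upgrades the nonzero $\mathscr{A}$-linear $(\varepsilon,\varepsilon)$-orthogonality preserving map $S$ to an \emph{exact} preserver (a map satisfying $\|Su\|=\gamma\|u\|$ and preserving normalized inner products) and only then composes with $T$. That detour is the sole reason the hypotheses ``$S$ nonzero, $\mathscr{A}$-linear, $\mathbb{K}(\mathscr{H})\subseteq\mathscr{A}\subseteq\mathbb{B}(\mathscr{H})$'' appear in part (ii): they are the standing assumptions of Corollary \ref{cr.12}, not of the composition argument itself. (Incidentally, the equivalence actually needed there is the one involving statement (viii) of Corollary \ref{cr.12}, since (viii) is the $(\varepsilon,\varepsilon)$-preserving condition; the paper's citation of (i)$\Leftrightarrow$(iv) is imprecise.) Your direct chaining $x\perp^{\delta}y \Rightarrow Tx\perp^{\varepsilon}Ty \Rightarrow S(Tx)\perp^{\varepsilon}S(Ty)$ bypasses Corollary \ref{cr.12} entirely, and in doing so shows that those structural hypotheses are superfluous for the preservation claim: \emph{any} $(\varepsilon,\varepsilon)$-orthogonality preserving $S:\mathscr{F}\longrightarrow\mathscr{F}$, with no condition on $\mathscr{A}$ and no nonvanishing assumption, makes $ST$ a $(\delta,\varepsilon)$-orthogonality preserving map; the $\mathscr{A}$-linearity of $S$ and $T$ is needed only so that the composite $ST$ is again $\mathscr{A}$-linear (so your closing remark should be sharpened: the hypothesis $\mathbb{K}(\mathscr{H})\subseteq\mathscr{A}\subseteq\mathbb{B}(\mathscr{H})$ guarantees nothing about $S$ in your argument --- it is simply inherited from the paper's reliance on Corollary \ref{cr.12}). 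What the paper's route buys is a conceptual picture, namely that one is composing with what is essentially a scaled isometry; what yours buys is elementarity and strictly weaker hypotheses.
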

\begin{proof}
The proof immediately follows from the definition of a $(\delta, \varepsilon)$-orthogonality preserving mapping and the equivalence (i)$\Leftrightarrow$(iv) of Corollary \ref{cr.12}.
\end{proof}
\bigskip

\textbf{Acknowledgements} The authors would like to thank the referee for several useful comments.

\bibliographystyle{amsplain}

\begin{thebibliography}{99}

\bibitem{A.R.1} Lj. Aramba\v{s}i\'{c} and R. Raji\'{c}, \textit{A strong version of the Birkhoff--James orthogonality in Hilbert $C^*$-modules}, Ann. Funct. Anal. \textbf{5} (2014), no. 1, 109--120.

\bibitem{B.G} D. Baki\'{c} and B. Gulja\v{s}, \textit{Hilbert $C^*$-modules over $C^*$-algebras of compact operators}, Acta Sci. Math. (Szeged) \textbf{68} (2002) 249-–269.

\bibitem{B.G.2} D. Baki\'{c} and B. Gulja\v{s}, \textit{Wigner's theorem in a class of Hilbert $C^*$-modules}, J. Math. Phys. \textbf{44} (2003) 2186-–2191.

\bibitem{J.C.4} J. Chmieli\'{n}ski, \textit{Linear mappings approximately preserving orthogonality}, J. Math. Anal. Appl. \textbf{304} (2005), 158-–169.

\bibitem{Ch.new} J. Chmieli\'{n}ski, \textit{Orthogonality equation with two unknown functions},  Aequationes Math. \textbf{90} (2016), 11-–23.

\bibitem{C.L.W} J. Chmieli\'{n}ski, R. {\L}ukasik and P. W\'{o}jcik, \textit{On the stability of the orthogonality equation
and the orthogonality-preserving property with two unknown functions}, Banach J. Math. Anal. \textbf{10} (2016), no. 4, 828--847.

\bibitem{C.W.1} J. Chmieli\'{n}ski and P. W\'{o}jcik, \textit{Isosceles-orthogonality preserving property and its stability}, Nonlinear Anal. \textbf{72} (2010), 1445--1453.

\bibitem{F.M.P} M. Frank, A. S. Mishchenko and A. A. Pavlov, \textit{Orthogonality-preserving, $C^*$-conformal and conformal module mappings on Hilbert $C^*$-modules}, J. Funct. Anal. \textbf{260} (2011), 327--339.

\bibitem{I.T} D. Ili\v{s}evi\'{c} and A. Turn\v{s}ek, \textit{Approximately orthogonality preserving mappings on $C^*$-modules}, J. Math. Anal. Appl. \textbf{341} (2008), 298--308.

\bibitem{K.H} L. Kong and  H. Cao, \textit{Stability of orthogonality preserving mapping and the orthogonality equation}, J. Shaanxi Normal Univ. Nat. Sci. Ed. \textbf{36(5)} (2008) 10–-14.

\bibitem{L.N.W.2} C.-W. Leung, C.-K. Ng and N.-C. Wong, \textit{Linear orthogonality preservers of Hilbert $C^*$-modules over $C^*$-algebras with real rank zero}, Proc. Amer. Math. Soc. \textbf{140} (2012), no. 9, 3151–-3160.

\bibitem{L.W} R. {\L}ukasik and P. W\'{o}jcik, \textit{Decomposition of two functions in the orthogonality equation},  Aequationes Math. \textbf{90} (2016), 495-–499.

\bibitem{Man} V. M. Manuilov and E. V. Troitsky, \textit{Hilbert $C^*$-modules}, In: Translations of Mathematical Monographs. \textbf{226}, American Mathematical Society, Providence, RI, 2005.

\bibitem{M.T} B. Moj\v{s}kerc and A. Turn\v{s}ek, \textit{Mappings approximately preserving orthogonality in normed spaces}, Nonlinear Anal. \textbf{73} (2010), 3821--3831.

\bibitem{P} V. Pambuccian, \textit{A logical look at characterizations of geometric transformations under mild
hypotheses}, Indag. Math. (N.S.) \textbf{11} (2000), no. 3, 453--462.

\bibitem{A.T.1} A. Turn\v{s}ek, \textit{On mappings approximately preserving orthogonality}, J. Math. Anal. Appl. \textbf{336} (1) (2007), 625--631.

\bibitem{P.W} P. W\'{o}jcik, \textit{On certain basis connected with operator and its applications}, J. Math. Anal. Appl. \textbf{423} (2) (2015), 1320--1329.

\bibitem{Z.M.1} A. Zamani and M.S. Moslehian, \textit{Approximate Roberts orthogonality},  Aequationes Math. \textbf{89} (2015), 529-–541.

\bibitem{Z.M.F} A. Zamani, M. S. Moslehian and M. Frank, \textit{Angle preserving mappings}, Z. Anal. Anwend. \textbf{34} (2015), 485--500.

\bibitem{Z.C.H.K} Y. Zhang, Y. Chen, D. Hadwin and L. Kong, \textit{AOP mappings and the distance to the scalar multiples of isometries}, J. Math. Anal. Appl. \textbf{431} (2) (2015), 1275--1284.

\end{thebibliography}

\end{document}